\documentclass[16pt, oneside, reqno]{amsart}
\usepackage{amsmath,amssymb,amsthm,enumerate,color,graphics,graphicx,epsfig,url}
\usepackage[T2A]{fontenc}
\usepackage[utf8]{inputenc}
\usepackage[english]{babel}
\usepackage{xcolor}
\usepackage{latexsym,amsfonts,mathrsfs}
\usepackage{hhline}
\usepackage{euscript}
\usepackage{cite,cmap}
\usepackage{subcaption}
\usepackage{textcase}
\usepackage{amsaddr}
\usepackage{hyperref}

\newtheorem{theorem}{Theorem}[section]
\newtheorem{lemma}[theorem]{Lemma}
\newtheorem{proposition}[theorem]{Proposition}
\newtheorem{corollary}[theorem]{Corollary}
\newtheorem{definition}[theorem]{Definition}

\theoremstyle{definition}
\newtheorem{example}[theorem]{Example}

\makeatletter
\renewenvironment{abstract}
  {\ifx\maketitle\relax
     \ClassWarning{\@classname}{Abstract should precede \protect\maketitle
       \space in AMS document classes; reported}%
   \fi
   \global\setbox\abstractbox=\vtop\bgroup
     \normalfont\Small
     \list{}{\labelwidth\z@
       \leftmargin3pc
       \rightmargin\leftmargin
     \itemindent\z@
     \listparindent\normalparindent
     \parsep\z@ \@plus\p@
     }
     \item[]
  }
  {\endlist\egroup
   \ifx\@setabstract\relax \@setabstracta \fi
  }
\makeatother

\makeatletter
\def\@settitle{\begin{flushleft}\Large\bfseries\@title\end{flushleft}}
\makeatother

\title[H\"older exponents and level sets of self-affine functions]{H\"older exponents and fractal structure of level sets of self-affine\\ functions associated with the $Q_s$-representation of numbers}

\date{}

\begin{document}



\maketitle

\noindent\textbf{Volodymyr Yelahin}

\noindent\textit{Institute of Mathematics, National Academy of Sciences of Ukraine \\
Tereshchenkivska Str., 3, Kyiv, 01024, Ukraine\\
e-mail: yelahin@imath.kiev.ua\\
https://orcid.org/0009-0001-6984-5090}

~

\noindent\textbf{Mykola Moroz}

\noindent\textit{Institute of Mathematics, National Academy of Sciences of Ukraine \\
Tereshchenkivska Str., 3, Kyiv, 01024, Ukraine\\
e-mail: mykola.moroz@imath.kiev.ua, corresponding author\\
https://orcid.org/0000-0001-6658-4924\\
[6pt]
Dragomanov Ukrainian State University\\
Pyrohova str., 9, Kyiv, 01054, Ukraine}

\begin{abstract}
We investigate a class of locally complicated self-affine functions defined via the $Q_s$-representation of real numbers. In particular, we compute local Hölder exponents at points with given asymptotic frequencies of digits in their $Q_s$-representation. Furthermore, we establish conditions under which these functions possess continuum level sets. Finally, for self-affine functions satisfying additional conditions, we describe the geometric structure of the set of maximum points and show that this set can be fractal.  
\end{abstract}

\noindent\textbf{\textit{Keywords:}} self-affine function, fractal interpolation function, $Q_s$-representation of numbers, level set, \\ local H\"older exponent, self-similar set, fractal set.

\noindent\textbf{\textit{2020 MSC:}} Primary 28A80; Secondary 26A16, 26A27, 11K55

\section{Introduction}

Among functions with complex local structure, self-affine functions occupy a prominent place. Historically, they became one of the simplest models within which examples of continuous nowhere monotonic, nowhere differentiable, and singular functions were constructed (Salem and Cantor singular functions, Takagi and Bolzano nowhere differentiable functions; see \cite{Dubuc2018, Jarn}). The study of self-affine functions played a significant role in the development of modern fractal geometry and fractal analysis, outlining new approaches to the study of irregular mathematical objects. This research area remains relevant today in the context of analyzing the metric and fractal properties of such functions.

According to \cite{Bedford1989,Dubuc2018,SZ1999}, the function $F\colon [a,b]\to\mathbb{R}$ is self-affine if its graph $G_F$ is the union of $s\geq2$ sets, each of which is the image of $G_F$ under some affine transformation $T_i$, i.e.,
\begin{equation*}
G_F=\bigcup_{i=1}^s T_i(G_F).
\end{equation*}
Traditionally, for transformations $T_i$ to correctly define a unique continuous function $F$ on the interval $[a,b]$, additional conditions are imposed:
\begin{enumerate}
\item[1.] $T_i((x,y))=(a_ix+b_i, c_ix+d_iy+e_i)$ and $|d_i|<1$ for all $i=1,\ldots,s$;
\item[2.] there exists a sequence of plane points $(x_0,y_0),(x_1,y_1),\ldots,(x_s,y_s)$ such that $a=x_0<x_1<\cdots<x_s=b$ and
\begin{equation*}
T_i\{(x_0,y_0),(x_s,y_s)\}=\{(x_{i-1},y_{i-1}),(x_i,y_i)\}
\end{equation*}
for all $i=1,\ldots,s$.
\end{enumerate}
The continuous function $F$ defined by such transformations $T_i$ for all $i=1,\ldots,s$ satisfies the equality
\begin{equation}
F(a_ix+b_i)=d_iF(x)+c_ix+e_i.
\end{equation}
If all parameters $a_i>0$, then $F$ is called \cite{Barnsley1988, Dubuc2018} a fractal interpolation function.

This work is devoted to self-affine functions defined in terms of the $Q_s$-representation of numbers.

Let $s\geq 2$ be a fixed natural number, $A_s=\{0,1,\ldots,s-1\}$, and $Q_s=\{q_0,\ldots,q_{s-1}\}$ be a positive stochastic vector. Define the numbers $\beta_0,\ldots,\beta_{s-1}$ as follows:
\begin{equation*}
\beta_0=0,\quad \beta_i=q_0+\cdots+q_{i-1} \text{ ~for all } i\in A_s\setminus\{0\}.
\end{equation*}
It is known \cite{AKPrT2011, Pr1998} that for every $x\in[0,1]$ there exists an infinite sequence $(\alpha_n)_{n=1}^\infty\in A_s^\mathbb{N}$ such that
\begin{equation}\label{Q-representation}
x=\beta_{\alpha_1}+\sum_{k=2}^\infty\left(\beta_{\alpha_k}\prod_{j=1}^{k-1}q_{\alpha_j}\right).
\end{equation}
The series \eqref{Q-representation} and its sum $x$ are briefly denoted by $\Delta^{Q_s}_{\alpha_1 \alpha_2\ldots}$. The expansion of the number $x$ into the series \eqref{Q-representation} is called the \emph{$Q_s$-representation of the number $x$}, and the numbers $\alpha_i = \alpha_i(x)$ are called the \emph{$Q_s$-digits} of this $Q_s$-representation.

Let the numbers $g_0,\ldots,g_{s-1}$ and $\delta_0,\ldots,\delta_{s-1}$ satisfy the conditions
\begin{gather}\label{conditions2}
\begin{cases}
0<|g_i|<1 \text{ for all } i\in A_s, \\
g_0+g_1+\cdots+g_{s-1}=1,\\
\delta_0=0,\quad \delta_i=g_0+\cdots+g_{i-1} \text{ for all } i\in A_s\setminus\{0\}.
\end{cases}
\end{gather}
The main object of our study is the function $f$ defined by
\begin{equation}\label{mainfunction}
f(x)=f\left(\Delta^{Q_s}_{\alpha_1 \alpha_2\ldots}\right)=\delta_{\alpha_1}+\sum_{k=2}^\infty\left(\delta_{\alpha_k}\prod_{j=1}^{k-1}g_{\alpha_j}\right).
\end{equation}
The function $f$ is a fractal interpolation function and is a solution to the system of functional equations
\begin{equation}
F\left(\beta_i+q_i x\right)=\delta_i+g_i F(x),\quad i\in A_s.
\end{equation}

In works \cite{Barnsley1988, Bedford1989, Dubuc2018, PrK2013, SZ1999}, conditions for nowhere monotonicity, singularity, and nowhere differentiability of the function $f$ were studied, the Hausdorff dimension of its graph, its global H\"older exponent and the local H\"older exponent for almost all points (in the sense of Lebesgue measure) were calculated. In \cite{PrGLSv2022}, the functions defined by equality \eqref{mainfunction} were generalized to the case of the $Q^*_s$-representation of numbers, which allowed the elimination of self-affinity and the acquisition of new fractal effects.

In this paper, we continue the mentioned research and calculate the local H\"older exponent of the function $f$ at points that have fixed frequencies of digits in their $Q_s$-representation. Special attention is paid to the case where $g_{k}<0$ and $\delta_k>1$ for some $k\in A_s$ and $g_i>0$ for all $i\in A_s\setminus\{k\}$, as a result of which the function $f$ is nowhere monotonic. In this case, its extrema are calculated, conditions under which there exist continuum level sets are established, and the set of maximum points has a Cantor-type structure; the non-invariance of the Hausdorff dimension, Baire category, and Lebesgue null-measure under the transformation $f$ is proved.

\section{Preliminaries}

\begin{definition}[{\cite{AKPrT2011,Pr1998}}]
A $Q_s$-representation $\Delta^{Q_s}_{\alpha_1 \alpha_2\ldots}$ with $\alpha_n=i$ for all $n>k$ is called periodic with period $(i)$ and is denoted by $\Delta^{Q_s}_{\alpha_1 \ldots\alpha_k (i)}$. 
\end{definition}

\begin{proposition}[\cite{AKPrT2011,Pr1998}, Main theorem on $Q_s$-representation]
  For any positive stochastic vector $Q_s$, every number $x\in[0,1]$ admits at most two $Q_s$-representations. Moreover, if $x$ admits two $Q_s$-representations, then these representations are exactly of the form  $\Delta^{Q_s}_{\alpha_1\ldots\alpha_n (0)}$ and $\Delta^{Q_s}_{\alpha_1\ldots\alpha_{n-1}[\alpha_n-1] (s-1)}$, where $\alpha_n\not=0$.
\end{proposition}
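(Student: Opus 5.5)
We prove existence and the ``at most two'' statement via the nested cylinder structure of the $Q_s$-representation. For a finite word $c_1\ldots c_n\in A_s^n$, define the cylinder
\[
\Delta^{Q_s}_{c_1\ldots c_n}=\Bigl[\beta_{c_1}+\sum_{k=2}^{n}\beta_{c_k}\prod_{j=1}^{k-1}q_{c_j},\ \beta_{c_1}+\sum_{k=2}^{n}\beta_{c_k}\prod_{j=1}^{k-1}q_{c_j}+\prod_{j=1}^{n}q_{c_j}\Bigr].
\]
The first step is to check that the cylinders of rank $n+1$ contained in a cylinder of rank $n$ tile it. Since $\beta_{i+1}-\beta_i=q_i$ and $\beta_{s-1}+q_{s-1}=1$, the $s$ subcylinders $\Delta^{Q_s}_{c_1\ldots c_n i}$, $i\in A_s$, are closed intervals of lengths $q_i\prod_{j\le n}q_{c_j}$. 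They are placed left to right in the order $i=0,\ldots,s-1$, neighbours share exactly one endpoint, and their union is $\Delta^{Q_s}_{c_1\ldots c_n}$. In particular the rank-one cylinders tile $[0,1]$. The lengths are at most $(\max_i q_i)^n\to0$, because $Q_s$ is a positive stochastic vector with $s\ge2$, so $\max_i q_i<1$.

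Existence then follows by induction: pick $\alpha_1$ with $x\in\Delta^{Q_s}_{\alpha_1}$, then $\alpha_2$ with $x\in\Delta^{Q_s}_{\alpha_1\alpha_2}$, and so on. The nested closed intervals shrink to a point, and the left endpoints are exactly the partial sums of the series defining $\Delta^{Q_s}_{\alpha_1\alpha_2\ldots}$. The series converges, so $x=\Delta^{Q_s}_{\alpha_1\alpha_2\ldots}$. Conversely, every sequence $(\alpha_n)$ determines a point lying in every cylinder $\Delta^{Q_s}_{\alpha_1\ldots\alpha_n}$, since the tail of the series lies in $[0,\prod_{j\le n}q_{\alpha_j}]$ because all $\beta_i\in[0,1)$. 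Hence the representations of $x$ correspond exactly to the chains of nested cylinders containing $x$.

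For uniqueness and the form of a second representation, suppose two representations $(\alpha_n)\neq(\alpha'_n)$ of $x$ first differ at position $n$. Then $x$ lies in two distinct adjacent-or-not subcylinders of $\Delta^{Q_s}_{\alpha_1\ldots\alpha_{n-1}}$. The tiling shows that these must be adjacent, say with digits $i-1$ and $i$ (take $\alpha_n=i$, $\alpha'_n=i-1$), and that $x$ is their common endpoint. This endpoint is the left endpoint of $\Delta^{Q_s}_{\alpha_1\ldots\alpha_{n-1}i}$. Staying at the left endpoint of every further cylinder forces all subsequent digits to be $0$, since only the subcylinder with digit $0$ contains the left endpoint, and every other one has its left endpoint strictly to the right, as $q_i>0$. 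Symmetrically, the point is the right endpoint of $\Delta^{Q_s}_{\alpha_1\ldots\alpha_{n-1}[i-1]}$, which forces all subsequent digits to be $s-1$. Thus the two representations are $\Delta^{Q_s}_{\alpha_1\ldots\alpha_n(0)}$ and $\Delta^{Q_s}_{\alpha_1\ldots\alpha_{n-1}[\alpha_n-1](s-1)}$. A third representation would have to agree with one of these two up to some first difference, and the same endpoint argument pins it down completely, so there are at most two.

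The main obstacle is the careful verification that interior points of a cylinder have a unique next digit while endpoints are treated correctly. This relies on strict positivity of all $q_i$, which guarantees that the subintervals are nondegenerate and overlap only at single endpoints.
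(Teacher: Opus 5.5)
The paper gives no proof of this proposition; it quotes it from \cite{AKPrT2011,Pr1998}, and the cylinder properties you derive in your first step are also only quoted, from \cite{PrK2013}. Your nested-cylinder argument is the usual route to this result, and it is correct. The tiling by closed intervals of positive length means two distinct subcylinders of a common parent meet only if they are adjacent, and then only at their shared endpoint. Strict positivity of $q_0$ and $q_{s-1}$ then forces the tails $(0)$ and $(s-1)$. You also prove existence of a representation, which the statement itself does not require.

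Two small points should be tightened.

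First, the reason you give for the tail $\sum_{k>n}\beta_{\alpha_k}\prod_{j<k}q_{\alpha_j}$ lying in $[0,\prod_{j\le n}q_{\alpha_j}]$ does not suffice. Knowing only $\beta_i<1$ bounds it by $\prod_{j\le n}q_{\alpha_j}\,(1+q_{\alpha_{n+1}}+q_{\alpha_{n+1}}q_{\alpha_{n+2}}+\cdots)$, which is larger than $\prod_{j\le n}q_{\alpha_j}$. What you need is $\beta_i\le 1-q_i$, after which the sum telescopes to at most $\prod_{j\le n}q_{\alpha_j}$. Equivalently, for $m\ge n$ the partial sums are left endpoints of $\Delta^{Q_s}_{\alpha_1\ldots\alpha_m}\subseteq\Delta^{Q_s}_{\alpha_1\ldots\alpha_n}$, and this cylinder is closed. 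This step matters, because it is what puts $x$ in every cylinder of each of its representations.

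Second, the ``at most two'' step is stated too tersely and can be made explicit. Your pairwise analysis shows that in any pair of distinct representations, one is eventually $0$ and the other eventually $s-1$. These are mutually exclusive since $s\ge 2$. Let $R_1$ be the representation ending in $(0)$ and $R_2$ the one ending in $(s-1)$. A third representation, paired with $R_1$, would have to be eventually $s-1$. Paired with $R_2$, it would have to be eventually $0$, which is impossible.
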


\begin{definition}[\cite{AKPrT2011,Pr1998}]
Numbers that have a unique $Q_s$-representation are called $Q_s$-unary, and numbers that have two $Q_s$-representations are called $Q_s$-binary.     
\end{definition}

\begin{proposition}[\cite{AKPrT2011,Pr1998}]
    If $x_1 = \Delta^{Q_s}_{\alpha_1\alpha_2\dots}$ and $x_2 = \Delta^{Q_s}_{\gamma_1\gamma_2\dots}$ are two distinct numbers, then $x_1 < x_2$  if and only if there exists $k \in \mathbb{N}$ such that $\alpha_i = \gamma_i$ for all $i<k$ and $\alpha_{k} < \gamma_{k}$.
\end{proposition}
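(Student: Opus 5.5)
We prove the order criterion through cylinder sets. For digits $c_1,\ldots,c_n\in A_s$, let the cylinder $\Delta^{Q_s}_{c_1\ldots c_n}$ be the set of all numbers $\Delta^{Q_s}_{c_1\ldots c_n\alpha_{n+1}\ldots}$ with arbitrary tails. Put $P_n=\prod_{j=1}^{n}q_{c_j}$ and $a_n=\beta_{c_1}+\sum_{k=2}^{n}\beta_{c_k}\prod_{j=1}^{k-1}q_{c_j}$. The series \eqref{Q-representation} splits after $n$ terms as $a_n+P_n\,y$, where $y=\Delta^{Q_s}_{\alpha_{n+1}\alpha_{n+2}\ldots}$ is the number coded by the tail. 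Since $y$ ranges over $[0,1]$ (every point of $[0,1]$ has a representation), the cylinder is exactly the closed interval $[a_n,\,a_n+P_n]$ of length $P_n$.

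Next we fix a prefix $c_1\ldots c_{k-1}$ and look at the subcylinders obtained by choosing the $k$-th digit $i$. Appending digit $i$ gives the interval $[a_{k-1}+P_{k-1}\beta_i,\; a_{k-1}+P_{k-1}\beta_{i+1}]$, with $\beta_s:=1$. Because $\beta_{i+1}=\beta_i+q_i$ and $q_i>0$, these $s$ intervals tile $[a_{k-1},a_{k-1}+P_{k-1}]$ from left to right in increasing order of $i$. Two of them overlap only at a shared endpoint, and only when they are adjacent.

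For the forward direction, suppose $\alpha_i=\gamma_i$ for $i<k$ and $\alpha_k<\gamma_k$. Then $x_1$ lies in the subcylinder for digit $\alpha_k$ and $x_2$ in the one for $\gamma_k$. By the tiling, every point of the first interval is $\le$ every point of the second, so $x_1\le x_2$. Since $x_1\neq x_2$ is assumed, $x_1<x_2$.

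For the converse, suppose $x_1<x_2$. Because the numbers are distinct, their digit sequences differ, so there is a first index $k$ with $\alpha_k\neq\gamma_k$. If $\gamma_k<\alpha_k$, the forward direction with the roles swapped gives $x_2<x_1$, a contradiction. Hence $\alpha_k<\gamma_k$, as required.

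The only delicate point is the shared endpoints of adjacent cylinders. These are exactly the $Q_s$-binary numbers from the Main theorem on $Q_s$-representation. They cause no trouble because we only use the weak inequality $x_1\le x_2$ together with the hypothesis $x_1\neq x_2$. The statement is also read with respect to whichever representations $x_1$ and $x_2$ are given in.
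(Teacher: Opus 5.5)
Your argument is correct. The paper does not prove this proposition; it quotes it from the cited literature. Your cylinder-tiling proof is the standard argument behind it, essentially properties 1--3 of $Q_s$-cylinders that the paper lists immediately afterwards.

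One small repair to the justification is needed. The parenthetical ``every point of $[0,1]$ has a representation'' only shows that the tail values $y$ cover $[0,1]$. The inclusion your order argument actually uses is the reverse one: $0\le \Delta^{Q_s}_{\alpha_{n+1}\alpha_{n+2}\ldots}\le 1$ for every digit tail, so that each cylinder lies inside $[a_n,a_n+P_n]$. This follows from $\beta_i=\beta_{i+1}-q_i\le 1-q_i$, which telescopes the $n$-th partial sum of any $Q_s$-series to at most $1-\prod_{j\le n}q_{\alpha_j}$.
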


\begin{definition}
Let $\left(c_1, \ldots, c_k\right)$ be a fixed sequence of digits from $A_s$. The set $\Delta^{Q_s}_{c_1\ldots c_k}$ of all numbers in $[0,1]$ that admit a $Q_s$-representation of the form $\Delta^{Q_s}_{c_1\ldots c_k\alpha_{k+1}\ldots}$ is called \cite{Pr1998} a $Q_s$-cylinder of rank $k$ with base $c_1\ldots c_k$. 
\end{definition}

\begin{proposition}
The $Q_s$-cylinders have the following properties \cite{PrK2013}: 
\end{proposition} 
\begin{enumerate}
\item[1.] $\Delta^{Q_s}_{c_1\ldots c_k}=[a,b]$, where $a=\Delta^{Q_s}_{c_1\ldots c_k(0)}$ and $b=\Delta^{Q_s}_{c_1\ldots c_k(s-1)}$;
\item[2.] $[0,1]=\bigcup\limits_{t=0}^{s-1}\Delta^{Q_s}_{t},\quad\Delta^{Q_s}_{c_1\ldots c_k}=\bigcup\limits_{t=0}^{s-1}\Delta^{Q_s}_{c_1\ldots c_k t}$;
\item[3.]
$\max \Delta^{Q_s}_{c_1\ldots c_k i} = \min\Delta^{Q_s}_{c_1\ldots c_k (i+1)}, \quad i\neq s-1;$
\item[4.] 
$\left|\Delta^{Q_s}_{c_1\ldots c_k}\right|=\prod\limits_{i=1}^k q_{c_i}\;\to\;0  \text{ ~as }k\to\infty;$
\item[5.]
$\bigcap\limits_{k=1}^{\infty}\Delta^{Q_s}_{c_1\ldots c_k}=x=\Delta^{Q_s}_{c_1\ldots c_k\ldots } \text{ ~for any sequence } (c_k)_{k=1}^\infty\in A_s^{\mathbb{N}}.$
\end{enumerate}

\begin{definition}[\cite{APrT2005,Pr1998}]
    Let $N_i(x, n)$ denote the number of occurrences of the digit $i \in A_s$ among the first $n$ digits in the $Q_s$-representation of $x$. If the limit
    \begin{equation*}
    \lim\limits_{n\to\infty} \frac{N_i(x, n)}{n} = \nu_i(x),
    \end{equation*}
exists, then it is called the frequency of the digit $i$ in the $Q_s$-representation of $x$.
\end{definition}

For $Q_s$-unary numbers, this definition is correctly defined. The frequency of digits of a $Q_s$-binary number $x$ will be correctly defined only after we agree which of its two representations we will use.

\begin{proposition}[\cite{APrT2005,Pr1998}]\label{frequency}
    For Lebesgue almost all $x\in[0,1]$, 
    \begin{equation*}
        \nu_0(x)=q_0,~\ldots,~\nu_{s-1}(x)=q_{s-1}.
    \end{equation*}
\end{proposition}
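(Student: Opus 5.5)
The plan is to realise the $Q_s$-digits as a sequence of independent, identically distributed random variables under Lebesgue measure, and then apply the strong law of large numbers.

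\textbf{Step 1: distribution of the digits.} Let $\lambda$ denote Lebesgue measure on $[0,1]$. Fix $i\in A_s$ and consider the digit functions $\alpha_n(x)$. The $Q_s$-binary numbers form a countable set, since each is an endpoint of some cylinder. So $\alpha_n$ is well defined $\lambda$-almost everywhere, and the convention chosen for binary points does not matter. By properties 1, 2 and 4 of $Q_s$-cylinders, the cylinders of rank $n$ are closed intervals that overlap only at endpoints and cover $[0,1]$. Their lengths are
\begin{equation*}
\lambda\bigl(\Delta^{Q_s}_{c_1\ldots c_n}\bigr)=\prod_{j=1}^n q_{c_j}.
\end{equation*}

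\textbf{Step 2: independence.} For any digits $c_1,\ldots,c_n$, Step 1 gives
\begin{equation*}
\lambda\{x:\alpha_1(x)=c_1,\ldots,\alpha_n(x)=c_n\}=\prod_{j=1}^n q_{c_j}.
\end{equation*}
Summing over all choices except the $k$-th digit yields $\lambda\{\alpha_k=c\}=q_c$, using $\sum_t q_t=1$. Hence the digits $\alpha_1,\alpha_2,\ldots$ are independent and identically distributed on the probability space $([0,1],\lambda)$, each with distribution $P(\alpha_k=c)=q_c$.

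\textbf{Step 3: strong law.} Put $\xi_k=\mathbf 1_{\{\alpha_k=i\}}$. These are i.i.d.\ Bernoulli variables with mean $q_i$, and $N_i(x,n)=\sum_{k=1}^n\xi_k(x)$. By Kolmogorov's (or Borel's) strong law of large numbers, $N_i(x,n)/n\to q_i$ for $\lambda$-almost every $x$. Intersecting these full-measure sets over the finitely many $i\in A_s$, and discarding the countable set of $Q_s$-binary points, gives the claim.

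The only point needing care is Step 2: one must check that the $\alpha_k$ are genuinely measurable and independent, and not merely that each cylinder has the correct length. This follows from the product formula for the joint law of $(\alpha_1,\ldots,\alpha_n)$, since the cylinders of rank $n$ generate the $\sigma$-algebra of $(\alpha_1,\ldots,\alpha_n)$ modulo null sets.
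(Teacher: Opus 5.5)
Your proof is correct and complete. The paper gives no argument of its own here; it only cites the literature. Your route is the standard proof of this fact: the cylinder-length formula makes the $Q_s$-digits independent and identically distributed under Lebesgue measure with law $(q_0,\ldots,q_{s-1})$, and Borel's strong law of large numbers then applies after discarding the countable set of $Q_s$-binary points.
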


In \cite{PrK2013}, the main properties of the function $f$ were established:
\begin{enumerate}
\item[1.] the function $f$ is well-defined, i.e., the series \eqref{mainfunction} converges, and its value at each $Q_s$-binary point does not depend on the chosen $Q_s$-representation of such a point;
\item[2.] the function $f$ is continuous;
\item[3.] if $g_i> 0$ for all $i\in A_s$ and $g_k\neq q_k$ for some $k\in A_s$, then $f$ is an increasing singular function;
\item[4.] if $g_k<0$ for some $k\in A_s$, then $f$ is nowhere monotonic;
\item[5.] if $|g_i|>q_i$ for all $i\in A_s$, then $f$ is a nowhere differentiable function.
\end{enumerate}

\begin{proposition}
For the function $f$, the following equalities hold:
\begin{gather}
    f(0)=f\left(\Delta^{Q_s}_{(0)}\right)=0,\quad f(1)=f\left(\Delta^{Q_s}_{(s-1)}\right)=1;\label{f01}\\
    f\left(\Delta^{Q_s}_{\alpha_1\ldots\alpha_n\alpha_{n+1}\ldots}\right)=\delta_{\alpha_1}+\sum\limits_{k=2}^{n}\left(\delta_{\alpha_k}\prod\limits_{j=1}^{k-1}g_{\alpha_j}\right)+\prod\limits_{j=1}^{n}g_{\alpha_j}\cdot f\left(\Delta^{Q_s}_{\alpha_{n+1}\alpha_{n+2}\ldots}\right);\label{easy1}\\
    f\left(\Delta^{Q_s}_{c_1\ldots c_n \alpha_{n+1}\ldots}\right)-f\left(\Delta^{Q_s}_{c_1\ldots c_n \beta_{n+1}\ldots}\right)=\prod\limits_{j=1}^{n}g_{c_j}\cdot \left(f\left(\Delta^{Q_s}_{\alpha_{n+1}\alpha_{n+2}\ldots}\right)-f\left(\Delta^{Q_s}_{\beta_{n+1}\beta_{n+2}\ldots}\right)\right).\label{easy2}
\end{gather}   
\end{proposition}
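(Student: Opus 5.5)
The plan is to read everything off the series \eqref{mainfunction}, using the conditions \eqref{conditions2}; no analytic tools beyond absolute convergence are needed. First I will record that the series converges absolutely. Put $g=\max_i|g_i|<1$ and $D=\max_i|\delta_i|$. The $k$-th term is then bounded by $D g^{k-1}$, so the series is dominated by a geometric series. This lets us regroup terms freely, and the function $f$ is well defined, as already noted from \cite{PrK2013}.

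For \eqref{f01}, take the representation $\Delta^{Q_s}_{(0)}$. Every digit is $0$ and $\delta_0=0$, so every term vanishes and $f(0)=0$. For $x=1=\Delta^{Q_s}_{(s-1)}$, every term has the form $\delta_{s-1}g_{s-1}^{k-1}$. Summing the geometric series gives $f(1)=\delta_{s-1}/(1-g_{s-1})$. The constraint $g_0+\dots+g_{s-1}=1$ means $\delta_{s-1}=1-g_{s-1}$, so $f(1)=1$. The only care needed is that $g_{s-1}\neq1$, which holds because $|g_{s-1}|<1$. By the Main theorem on $Q_s$-representation, $0$ and $1$ have unique representations, so the choice of representation is not an issue here.

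For \eqref{easy1}, I will split the series \eqref{mainfunction} into two parts:
\begin{itemize}
\item the terms with $k\le n$, which form the explicit finite sum;
\item the tail with $k\ge n+1$.
\end{itemize}
In each tail term the product $\prod_{j=1}^{k-1}g_{\alpha_j}$ factors as $\prod_{j=1}^{n}g_{\alpha_j}\cdot\prod_{j=n+1}^{k-1}g_{\alpha_j}$, where the second product is empty for $k=n+1$. Pulling out the common factor and reindexing with $m=k-n$ turns the tail into $\prod_{j=1}^n g_{\alpha_j}$ times the series \eqref{mainfunction} for $\Delta^{Q_s}_{\alpha_{n+1}\alpha_{n+2}\ldots}$. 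Absolute convergence justifies the splitting.

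For the binary points, the fact that $f$ does not depend on the chosen representation (property 1 from \cite{PrK2013}) makes both sides of \eqref{easy1} well defined for whichever representation we use. Finally, \eqref{easy2} follows by applying \eqref{easy1} to both points with the same prefix $c_1\ldots c_n$ and subtracting: the finite sums coincide and cancel, and the common factor $\prod_{j=1}^n g_{c_j}$ comes out.

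There is no real obstacle here. The one point requiring attention is the bookkeeping of indices and empty products when reindexing the tail, together with explicitly citing absolute convergence and representation-independence so that all manipulations are legitimate.
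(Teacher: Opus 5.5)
Your proof is correct and takes the same approach the paper implicitly intends. The paper gives no proof and treats all three identities as immediate from the defining series \eqref{mainfunction}: termwise evaluation for \eqref{f01}, splitting off the tail and reindexing for \eqref{easy1}, and subtracting for \eqref{easy2}, with ordinary convergence of the series sufficient for the head/tail split even though you invoked absolute convergence.
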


\begin{proposition}
If $g_i<0$ for some $i\in A_s$, then $f$ has an unbounded variation.
\end{proposition}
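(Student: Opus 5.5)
We argue that the variation of $f$ on $[0,1]$ is at least a sum over the cylinders of rank $n$ that grows without bound as $n\to\infty$. The basic identity comes from property 1 of $Q_s$-cylinders together with \eqref{easy2} and \eqref{f01}. For a cylinder $\Delta^{Q_s}_{c_1\ldots c_n}=[a,b]$ with $a=\Delta^{Q_s}_{c_1\ldots c_n(0)}$ and $b=\Delta^{Q_s}_{c_1\ldots c_n(s-1)}$ we have
\begin{equation*}
f(b)-f(a)=\prod_{j=1}^{n}g_{c_j}\cdot\left(f(1)-f(0)\right)=\prod_{j=1}^{n}g_{c_j}.
\end{equation*}
By property 2 the rank-$n$ cylinders partition $[0,1]$ into $s^n$ consecutive closed intervals meeting only at endpoints. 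Their endpoints therefore form a partition of $[0,1]$, and
\begin{equation*}
V_0^1(f)\;\ge\;\sum_{(c_1,\ldots,c_n)\in A_s^n}\prod_{j=1}^n|g_{c_j}| \;=\;\Bigl(\sum_{i=0}^{s-1}|g_i|\Bigr)^n .
\end{equation*}

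Next we show that $\sum_{i}|g_i|>1$ whenever some $g_k<0$. By \eqref{conditions2} we have $\sum_i g_i=1$. Since $g_k<0$ gives $|g_k|=-g_k>g_k$, and $|g_i|\ge g_i$ for every other $i$,
\begin{equation*}
\sum_i|g_i| \;=\; \sum_i g_i + 2\sum_{g_i<0}|g_i| \;=\; 1+2\sum_{g_i<0}|g_i|\;>\;1.
\end{equation*}
Hence $\bigl(\sum_i|g_i|\bigr)^n\to\infty$, and the total variation of $f$ on $[0,1]$ is infinite.

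The same argument localizes. By \eqref{easy2}, on any cylinder $\Delta^{Q_s}_{c_1\ldots c_m}$ the variation is at least $\prod_{j\le m}|g_{c_j}|\cdot\bigl(\sum_i|g_i|\bigr)^n$ for every $n$. Since every interval contains a cylinder, $f$ in fact has unbounded variation on every subinterval, which is a stronger statement than required.

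No step is a real obstacle. The only point needing care is to check that the endpoints of the rank-$n$ cylinders are ordered consistently, so that the sum of the increments over them is genuinely a partition sum in the definition of variation. This ordering follows from the order criterion for $Q_s$-representations (the proposition comparing $x_1<x_2$ digitwise) and from property 3 of cylinders.
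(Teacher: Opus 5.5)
Your proof is correct and follows the paper's argument essentially verbatim: you bound $V_0^1(f)$ below by the partition sum over rank-$n$ cylinders, evaluate it via \eqref{easy2} and \eqref{f01} as $\bigl(\sum_i|g_i|\bigr)^n$, and conclude from $\sum_i|g_i|>1$. Beyond the paper, you justify that last inequality through $\sum_i|g_i|=1+2\sum_{g_i<0}|g_i|$, which the paper only asserts, and your localization to arbitrary subintervals is a correct additional observation.
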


\begin{proof}
We estimate the variation $V_0^1(f)$ from below. Consider a partition of the interval $[0,1]$ by $Q_s$-cylinders of rank $n$ and define $V_n$ given by
\begin{equation*}
V_n =\sum_{(c_1,\ldots, c_n)\in A_s^n}\left|f\left(\max\Delta^{Q_s}_{c_1\ldots c_n}\right)-f\left(\min\Delta^{Q_s}_{c_1\ldots c_n}\right)\right|.
\end{equation*}
Clearly, $V_0^1(f) \geq V_n$ for any $n \in \mathbb{N}$. Let us calculate the value of $V_n$ using equality \eqref{easy2}: 

\begin{align*}
V_n&= \sum_{(c_1,\ldots, c_n)\in A_s^n}\left|f\left(\Delta_{c_1\ldots c_n(s-1)}^{Q_s}\right)-f\left(\Delta_{c_1\ldots c_n(0)}^{Q_s}\right)\right|=\\
&=\sum_{(c_1,\ldots, c_n)\in A_s^n} \left(\prod_{j=1}^{n}\left|g_{c_j}\right|\cdot\left|f\left(\Delta^{Q_s}_{(s-1)}\right)-f\left(\Delta^{Q_s}_{(0)}\right)\right|\right) = \\
&=\sum_{(c_1,\ldots, c_n)\in A_s^n} \prod_{j=1}^{n}\left|g_{c_j}\right| = \left(\left|g_0\right|+\cdots+\left|g_{s-1}\right|\right)^n.
\end{align*}
From the condition of the theorem we have
$\left|g_0\right|+\cdots+\left|g_{s-1}\right|>1$, and therefore $\lim\limits_{n \to \infty} V_n=\infty$. Hence, $f$ has unbounded variation.
\end{proof}

Since the derivative does not allow for a complete description of the local structure of singular and nowhere differentiable functions, the local and global H\"older exponents are used for their analysis

\begin{definition}[{\cite{SZ1999}}]
A function $F\colon I\to\mathbb{R}$ is said to satisfy the H\"older condition (is H\"older continuous) with the exponent $\alpha>0$ on an interval $I\subseteq\mathbb{R}$ if
\begin{equation*}
    \sup_{x,y\in I,~x\not=y}\frac{\left|F(x)-F(y)\right|}{|x-y|^\alpha}<\infty.
\end{equation*}    
\end{definition}

If a function $F$ satisfies the H\"older condition with the exponent $\alpha$, then it also satisfies the H\"older condition with every exponent $\alpha'\in(0,\alpha]$. The number
\begin{equation*}
\alpha_F=\sup\left\{\alpha>0\colon \sup_{x,y\in I,~x\not=y}\frac{\left|F(x)-F(y)\right|}{|x-y|^\alpha}<\infty\right\}
\end{equation*}
is called \cite{SZ1999} the \emph{global H\"older exponent} of the function $F$ on the interval $I$. If the function $F$ does not satisfy the~H\"older condition for any $\alpha>0$, then by convention $\alpha_F=0$. 

\begin{proposition}[\cite{Dubuc2018}]
The function $f$ satisfies the H\"older condition with exponents
\begin{equation*}
   \alpha\leq\min\limits_{i\in A_s} \frac{\ln |g_i|}{\ln q_i}.
\end{equation*}
\end{proposition}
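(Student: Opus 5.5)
Set $\alpha_0=\min_{i\in A_s}\ln|g_i|/\ln q_i$. Since $0<|g_i|<1$ and $0<q_i<1$, we have $\alpha_0>0$ and $|g_i|\le q_i^{\alpha_0}$ for every $i$. Hence for every cylinder
\begin{equation*}
\prod_{j=1}^n|g_{c_j}|\le\Bigl(\prod_{j=1}^n q_{c_j}\Bigr)^{\alpha_0}=\left|\Delta^{Q_s}_{c_1\ldots c_n}\right|^{\alpha_0}.
\end{equation*}
It suffices to prove the H\"older condition with exponent $\alpha_0$; smaller exponents follow on $[0,1]$ because $|x-y|\le1$.

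\textbf{Step 1: oscillation on cylinders.} Let $M=\sup_{u,v\in[0,1]}|f(u)-f(v)|$, which is finite since $f$ is continuous. By \eqref{easy2}, the oscillation of $f$ on $\Delta^{Q_s}_{c_1\ldots c_n}$ equals $\prod_{j=1}^n|g_{c_j}|\cdot M$. By the estimate above, this is at most $M\,|\Delta^{Q_s}_{c_1\ldots c_n}|^{\alpha_0}$.

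\textbf{Step 2: comparing $x<y$ with cylinders.} Put $q_{\min}=\min_i q_i$. Every cylinder of rank $n+1$ has length at least $q_{\min}$ times the length of its parent, by property 4 of cylinders. For a point $x$ and the distance $h=y-x$, we would like a cylinder of length comparable to $h$ containing both points, but $x$ and $y$ may lie close to a common boundary of large cylinders. So we use a two-point covering instead.

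Let $n$ be the smallest rank such that the rank-$n$ cylinder containing $x$ (using either representation) has length $\le h$. Then its parent has length $>h$, so the cylinder has length $\ge q_{\min}h$.

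This alone is not enough. The cleaner route is a stopping-time partition: consider all cylinders $\Delta=\Delta^{Q_s}_{c_1\ldots c_n}$ with $|\Delta|\le h<|\text{parent}|$. These are disjoint except at endpoints and cover $[0,1]$. Each has length in $[q_{\min}h,\,h]$.

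The interval $[x,y]$ has length $h$ and meets at most $1/q_{\min}+2$ such cylinders, since all of them except the two end ones lie inside $[x,y]$ and each has length at least $q_{\min}h$. Consecutive cylinders share endpoints. By the triangle inequality through these shared endpoints and Step 1,
\begin{equation*}
|f(x)-f(y)|\le\Bigl(\frac{1}{q_{\min}}+2\Bigr)M h^{\alpha_0}=C|x-y|^{\alpha_0}.
\end{equation*}

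\textbf{Main obstacle.} The delicate point is Step 2: making the stopping-time family well defined and verifying that it really partitions $[0,1]$ into intervals. This follows from the nested structure of cylinders (properties 2–5) and from the fact that lengths tend to $0$. Once the counting bound on the number of cylinders meeting $[x,y]$ is justified, the rest is routine.
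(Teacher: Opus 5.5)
Your proof is correct. The paper gives no proof of this proposition; it only quotes it from \cite{Dubuc2018}, so there is no in-paper argument to match.

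\textbf{What your route does.} You use a Moran-type cover. Identity \eqref{easy2} gives the exact oscillation $(\max f-\min f)\prod_{j}|g_{c_j}|\le(\max f-\min f)\,|\Delta^{Q_s}_{c_1\ldots c_n}|^{\alpha_0}$ on each cylinder. The stopping-time family $\{\Delta : |\Delta|\le h<|\mathrm{parent}(\Delta)|\}$ then turns this into a bound that is uniform in $x,y$.

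\textbf{Closing the step you flagged.} No member of the family is a proper subcylinder of another, since the parent of the smaller one would have length $\le h$. So distinct members overlap at most in endpoints, by property 3. The lower bound $|\Delta|>q_{\min}h$ makes the family finite. Hence it is a partition of $[0,1]$ into consecutive intervals, as you need.

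\textbf{An alternative closer to Section 3.} The digit arguments there, especially the proofs at $Q_s$-binary points, suggest another route. Take the longest common prefix $c_1\ldots c_n$ of $x<y$.
\begin{itemize}
\item If the next digits differ by at least $2$, then directly $y-x\ge q_{\min}|\Delta^{Q_s}_{c_1\ldots c_n}|$.
\item If the next digits are $i$ and $i+1$, split at the $Q_s$-binary point $z=\Delta^{Q_s}_{c_1\ldots c_n i(s-1)}=\Delta^{Q_s}_{c_1\ldots c_n [i+1](0)}$. Compare $x$ with $z$ inside their common cylinder, and similarly $z$ with $y$. Since $x$ eventually leaves the run of digits $s-1$, that common cylinder has length at most $|x-z|/q_{\min}$.
\end{itemize}
This gives the constant $2(\max f-\min f)\,q_{\min}^{-\alpha_0}$ with only two pieces, but it needs case analysis on representations. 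Your covering argument avoids all representation bookkeeping, at the cost of chaining up to $1/q_{\min}+2$ cylinders.

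\textbf{Minor cleanups.}
\begin{itemize}
\item Your $M$ is $\max f-\min f$, not the paper's $M=\max f$.
\item The case $h=1$ (i.e.\ $x=0$, $y=1$) is trivial but lies outside your stopping-time family, so state it separately.
\item Delete the abandoned first attempt at the start of Step 2.
\end{itemize}
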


\begin{definition}[{\cite{SZ1999}}]
A function $F$ is said to satisfy the H\"older condition (is H\"older continuous) with the exponent $\alpha>0$ at a point $x_0$ if
\begin{equation*}
    \limsup_{x\to x_0}\frac{\left|F(x_0)-F(x)\right|}{\left|x_0-x\right|^\alpha}<\infty.
\end{equation*}
\end{definition}

The \emph{local H\"older exponent} of a function $F$ at a point $x_0$ is the number \cite{Bedford1989,SZ1999}
\begin{equation*}
\alpha_F(x_0)=\sup\left\{\alpha>0\colon \limsup_{x\to x_0}\frac{\left|F(x_0)-F(x)\right|}{\left|x_0-x\right|^\alpha}<\infty\right\}=\liminf_{x\to x_0}\frac{\ln \left|F(x_0)-F(x)\right|}{\ln \left|x_0-x\right|}.
\end{equation*}
Similarly, if the function $F$ does not satisfy the H\"older condition at the point $x_0$ for any $\alpha>0$, then by convention $\alpha_F(x_0)=0$.

\begin{proposition}[\cite{Dubuc2018}]
For Lebesgue almost all $x\in[0,1]$, the local H\"older exponent of the function $f$ at the point $x$ equals 
\begin{equation*}
    \alpha_f(x)=\frac{\sum\limits_{i=0}^{s-1} q_i\ln|g_i|}{\sum\limits_{i=0}^{s-1} q_i\ln q_i}.
\end{equation*}
\end{proposition}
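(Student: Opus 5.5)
Under the conventions fixed above, the plan is to prove a two-sided estimate: the local H\"older exponent is $\ge$ the stated ratio (an upper bound on $|f(x)-f(x_0)|$) and $\le$ it (a lower bound along a sequence $x_n\to x_0$). By Proposition~\ref{frequency}, for Lebesgue almost every $x_0$ the digit frequencies equal $q_i$. We also discard the countable set of $Q_s$-binary points. So fix a $Q_s$-unary $x_0=\Delta^{Q_s}_{\alpha_1\alpha_2\ldots}$ with $\nu_i(x_0)=q_i$. In addition, require that the digits $0$ and $s-1$ do not occur in long runs, i.e. the run length at position $n$ is $o(n)$. This holds almost surely, by Borel--Cantelli, since runs of length $\ge c\ln n$ at position $n$ occur only finitely often. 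Write $I_n(x_0)=\Delta^{Q_s}_{\alpha_1\ldots\alpha_n}$, $|I_n|=\prod_{j\le n}q_{\alpha_j}$ and $G_n=\prod_{j\le n}|g_{\alpha_j}|$. By the frequency assumption, $\frac1n\ln|I_n|\to\sum q_i\ln q_i$ and $\frac1n\ln G_n\to\sum q_i\ln|g_i|$.

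\emph{Upper bound on oscillation.} By \eqref{easy2}, the oscillation of $f$ on $I_n$ equals $G_n\cdot\operatorname{osc}_{[0,1]}f$, and $\operatorname{osc}_{[0,1]}f=:M<\infty$ by continuity. Take $x$ close to $x_0$ and let $n$ be maximal with $x\in I_n(x_0)$, or in an adjacent rank-$n$ cylinder if $x$ crosses an endpoint. Then $|f(x)-f(x_0)|\le 2MG_{n}$. The lower bound on $|x-x_0|$ is the delicate part: $x\notin I_{n+1}(x_0)$ only gives $|x-x_0|\ge \operatorname{dist}(x_0,\partial I_{n+1}(x_0))$. The distance to the boundary is at least $|I_{n+m}|$, where $m$ is the length of the run of $0$'s or $(s-1)$'s following position $n+1$, because $x_0$ lies in a non-extreme subcylinder after that run. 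Since $m=o(n)$, we get $\ln|x-x_0|\ge (1+o(1))\ln|I_n|$. To handle the adjacent cylinder, choose $n$ as the largest rank such that $x$ and $x_0$ lie in the same or adjacent rank-$n$ cylinders; the neighbour's $G$ differs from $G_n$ by a factor depending only on $o(n)$ digits. Taking logarithms gives $\liminf \ln|f(x)-f(x_0)|/\ln|x-x_0|\ge\lim \ln G_n/\ln|I_n|$, which is the ratio in the statement.

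\emph{Lower bound on exponent.} For each $n$, choose $x_n$ as an endpoint of $I_n(x_0)$ for which $|f(x_n)-f(x_0)|\ge \tfrac12 G_n\cdot|f(1)-f(0)|=\tfrac12 G_n$. This is possible because, by \eqref{easy2} and \eqref{f01}, $f$ takes values on $I_n$ at its two endpoints that differ by exactly $G_n$, so $f(x_0)$ is at distance $\ge G_n/2$ from one of them. Then $|x_n-x_0|\le|I_n|$, so $\ln|f(x_n)-f(x_0)|/\ln|x_n-x_0|\le (\ln G_n-\ln 2)/\ln|I_n|$, because both logs are negative and $\ln|x_n-x_0|\le\ln|I_n|$. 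This tends to the ratio, which gives $\alpha_f(x_0)\le$ the ratio.

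The main obstacle is the upper oscillation step, specifically controlling $|x-x_0|$ from below when $x_0$ is near a cylinder boundary. The run-length condition is what makes this work. I would state it as a separate almost-sure lemma, via Borel--Cantelli with $P(\text{run}\ge m)\le\max(q_0,q_{s-1})^m$, before the main argument.
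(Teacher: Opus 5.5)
Your proposal is correct and takes essentially the paper's route. The paper obtains this statement as a corollary of Theorem~\ref{9} and its converse, applied at points with $\nu_i=q_i$ from Proposition~\ref{frequency}. Theorem~\ref{9} rescales via \eqref{easy2} and controls runs of $0$'s and $(s-1)$'s after position $n$; its converse uses test points at scale $|I_n|$ where the increment of $f$ is at least a constant times $\prod_{j\le n}|g_{\alpha_j}|$.

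The differences are minor. The paper derives $t_i(x,n)=o(n)$ deterministically from $\nu_0,\nu_{s-1}<1$ (Lemma~\ref{ti}), so your Borel--Cantelli step is redundant. The adjacent-cylinder bookkeeping is also unnecessary: for $Q_s$-unary $x_0$ you can simply take $n$ to be the number of common initial digits.
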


\section{Local H\"older Exponents of the Function $f$}

\begin{lemma}\label{ti}
Let $x=\Delta^{Q_s}_{\alpha_1 \alpha_2\ldots}$ be a $Q_s$-unary number with digit frequencies $\nu_i(x)<1$ for all $i\in A_s$, and let 
\begin{equation*}
 t_i(x,n)=\min\left\{t\geq 0\colon \alpha_{n+t+1}\neq i\right\}.  
\end{equation*}
Then 
\begin{equation*}
    \lim_{n\to\infty}\frac{t_i(x,n)}{n}=0.
\end{equation*}
\end{lemma}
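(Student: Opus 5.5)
We argue by contradiction, using only the definition of frequency. Fix $i\in A_s$. Since $x$ is $Q_s$-unary and all digits equal to $i$ from some position on would make $x$ periodic (hence, for $i=0$ or $i=s-1$, possibly $Q_s$-binary), the hypothesis $\nu_i(x)<1$ already guarantees that the digit sequence is not eventually constant equal to $i$. Hence $t_i(x,n)$ is finite for every $n$, and the quantity in the statement makes sense.

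Suppose the conclusion fails. Then there exist $\varepsilon>0$ and an infinite increasing sequence $n_k\to\infty$ with $t_k:=t_i(x,n_k)\geq \varepsilon n_k$. By definition of $t_i$, the digits $\alpha_{n_k+1},\ldots,\alpha_{n_k+t_k}$ are all equal to $i$. Counting occurrences of $i$ among the first $m_k:=n_k+t_k$ digits gives
\begin{equation*}
N_i(x,m_k)\geq t_k, \qquad\text{so}\qquad \frac{N_i(x,m_k)}{m_k}\geq \frac{t_k}{n_k+t_k}=\frac{1}{1+n_k/t_k}\geq \frac{\varepsilon}{1+\varepsilon}.
\end{equation*}
This alone gives only $\nu_i(x)\geq \varepsilon/(1+\varepsilon)$, which is not a contradiction. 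The refinement is to compare the counts at times $n_k$ and $m_k$:
\begin{equation*}
N_i(x,m_k)=N_i(x,n_k)+t_k.
\end{equation*}
Dividing by $m_k$ and writing $\lambda_k=n_k/m_k\in\left[0,\tfrac{1}{1+\varepsilon}\right]$, we get
\begin{equation*}
\frac{N_i(x,m_k)}{m_k}=\lambda_k\frac{N_i(x,n_k)}{n_k}+(1-\lambda_k).
\end{equation*}
As $k\to\infty$, both frequency ratios tend to $\nu_i:=\nu_i(x)$. Passing to a subsequence with $\lambda_k\to\lambda\leq 1/(1+\varepsilon)<1$, we obtain $\nu_i=\lambda\nu_i+(1-\lambda)$, that is, $(1-\lambda)(\nu_i-1)=0$. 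Hence $\nu_i=1$, contradicting the hypothesis $\nu_i(x)<1$.

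The only delicate point is that the existence of the frequency limit $\nu_i(x)$ is assumed in the hypothesis, so both ratios converge to the same limit; the argument then reduces to the convex-combination identity above. One should also treat separately the degenerate case of arbitrarily long (even unbounded) runs. This case is covered by the same computation, because $\lambda_k$ stays bounded away from $1$ along the chosen subsequence. No earlier results of the paper beyond the definition of frequency are needed.
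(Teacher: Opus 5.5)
Your proof is correct and takes essentially the same approach as the paper. Both rest on the identity $N_i(x,n+t_i(x,n))=N_i(x,n)+t_i(x,n)$ and on the ratios at $n$ and at $n+t_i(x,n)$ both tending to $\nu_i(x)$; your convex-combination relation rearranges exactly to the paper's difference formula $\frac{(t_i/n)(1-N_i/n)}{1+t_i/n}$, and the only difference is that you argue by contradiction along a subsequence where the paper shows directly that this difference tends to $0$ and then uses $1-\nu_i(x)>0$.
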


\begin{proof}
By assumption $\lim\limits_{n\to\infty}\frac{N_i(x,n)}{n}=\nu_i(x)<1$. Then
\begin{equation*}
    \lim_{n\to\infty}\frac{N_i(x,n)+t_i(x,n)}{n+t_i(x,n)}=\lim_{n\to\infty}\frac{N_i(x,n+t_i(x,n))}{n+t_i(x,n)}=\nu_i(x),
\end{equation*}
since the sequence $\left\{\frac{N_i(x,n+t_i(x,n))}{n+t_i(x,n)}\right\}_{n=1}^\infty$ is a subsequence of $\left\{\frac{N_i(x,n)}{n}\right\}_{n=1}^\infty$. Consider the difference between these expressions:
\begin{equation*}
\frac{N_i(x,n)+t_i(x,n)}{n+t_i(x,n)}-\frac{N_i(x,n)}{n}=\frac{t_i(x,n)\cdot(n-N_i(x,n))}{n\cdot(n+t_i(x,n))}=\frac{\frac{t_i(x,n)}{n}\left(1-\frac{N_i(x,n)}{n}\right)}{1+\frac{t_i(x,n)}{n}}.
\end{equation*}
Hence,
\begin{equation*}
    \lim_{n\to\infty}\frac{\frac{t_i(x,n)}{n}}{1+\frac{t_i(x,n)}{n}}=0
\end{equation*}
and $\lim\limits_{n\to\infty}\frac{t_i(x,n)}{n}=0$.
\end{proof}

Denote by $L$ the quantity
\begin{equation*}
 L=\frac{\sum\limits_{i=0}^{s-1} \nu_i\ln|g_i|}{\sum\limits_{i=0}^{s-1} \nu_i\ln q_i}.
\end{equation*}

\begin{theorem}\label{9}
Let $x_0$ be a $Q_s$-unary number whose $Q_s$-representation has digit frequencies $\nu_i$, $i \in A_s$, with $\nu_0,~\nu_{s-1}<1$. Then the function $f$ satisfies the H\"older condition with exponent $\alpha<L$ at $x_0$.
\end{theorem}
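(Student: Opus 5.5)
For $x$ close to $x_0$, I will localize both $|x-x_0|$ and $|f(x)-f(x_0)|$ through the first digit at which the $Q_s$-representations of $x$ and $x_0$ disagree. Then I will use the frequencies $\nu_i$ to turn products of $q$'s and $g$'s into exponentials in $n$.

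Fix $\alpha<L$ and choose $\varepsilon>0$ small enough that $L_\varepsilon:=\frac{\sum\nu_i\ln|g_i|-\varepsilon}{\sum\nu_i\ln q_i+\varepsilon}>\alpha$. Write $x_0=\Delta^{Q_s}_{\alpha_1\alpha_2\ldots}$ and let $x\neq x_0$ be close to $x_0$. Let $n=n(x)$ be the largest integer such that $x$ lies in the closed cylinder $\Delta^{Q_s}_{\alpha_1\ldots\alpha_n}$; since $x_0$ is unary, $n\to\infty$ as $x\to x_0$.

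\emph{Upper bound for the numerator.} By \eqref{easy2} and since $|f|$ is bounded on $[0,1]$ by some $M$ (continuity),
\[
|f(x)-f(x_0)|\le 2M\prod_{j=1}^n|g_{\alpha_j}|=2M\exp\Big(\sum_i N_i(x_0,n)\ln|g_i|\Big)\le 2M e^{n(\sum\nu_i\ln|g_i|+\varepsilon')}
\]
for large $n$.

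\emph{Lower bound for $|x-x_0|$.} This is the main obstacle. The point $x$ lies outside $\Delta^{Q_s}_{\alpha_1\ldots\alpha_{n+1}}$, so $|x-x_0|$ is at least the distance from $x_0$ to the boundary of that cylinder. That distance can be small if $x_0$ sits near an endpoint, i.e. if the digits after position $n+1$ are long runs of $0$ or of $s-1$. Here I will invoke Lemma~\ref{ti} with $i=0$ and $i=s-1$, using the hypothesis $\nu_0,\nu_{s-1}<1$. Suppose, say, $x<x_0$. Let $m=n+1+t_0(x_0,n+1)$, so that $\alpha_{m+1}\neq0$. Then $x_0$ lies in $\Delta^{Q_s}_{\alpha_1\ldots\alpha_{m+1}}$, which does not contain the left endpoint of $\Delta^{Q_s}_{\alpha_1\ldots\alpha_m}$. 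Hence $x_0-x\ge$ the length of $\Delta^{Q_s}_{\alpha_1\ldots\alpha_m 0}$, which is at least $q_{\min}\prod_{j=1}^{m}q_{\alpha_j}$. The case $x>x_0$ is symmetric with $t_{s-1}$. By Lemma~\ref{ti}, $m=n+o(n)$, so $\prod_{j\le m}q_{\alpha_j}\ge e^{n(\sum\nu_i\ln q_i-\varepsilon')}$ for large $n$. Since the logarithms are negative, this gives $|x-x_0|\ge c\,e^{n(\sum\nu_i\ln q_i-\varepsilon')}$.

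\emph{Combining.} Both exponents are negative. Raising the lower bound for $|x-x_0|$ to the power $\alpha$ and comparing with the upper bound for the numerator, the ratio $|f(x)-f(x_0)|/|x-x_0|^\alpha$ is bounded by a constant times
\[
\exp\Big(n\big[\textstyle\sum\nu_i\ln|g_i|+\varepsilon'-\alpha(\sum\nu_i\ln q_i-\varepsilon')\big]\Big).
\]
The bracket is negative for small $\varepsilon'$ because $\alpha<L$. This is checked by dividing by the negative quantity $\sum\nu_i\ln q_i$, which also requires $\sum\nu_i\ln q_i\neq0$; this is automatic since every $q_i<1$. The ratio therefore tends to $0$, so in particular the $\limsup$ is finite.

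The remaining detail is to handle the first digits exactly, since $n$ counts agreements with $x_0$'s representation. When $x$ is binary, either of its representations can be used, because $f$ is well defined and the estimate uses only cylinder membership.
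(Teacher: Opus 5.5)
Your proof is correct and follows essentially the same route as the paper: factor through the common prefix via \eqref{easy2}, bound the $f$-tail by a constant, and bound $|x-x_0|$ from below using the run of $0$'s (or $(s-1)$'s) after position $n+1$, which is $o(n)$ by Lemma~\ref{ti}. One harmless slip: the condition you impose on $L_\varepsilon$ is vacuous, since $L_\varepsilon$ as written always exceeds $L$. The quantity your final bracket actually needs is $\frac{\sum\nu_i\ln|g_i|+\varepsilon'}{\sum\nu_i\ln q_i-\varepsilon'}$, which tends to $L$ from below and so exceeds $\alpha$ for small $\varepsilon'$.
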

 
\begin{proof}
Let $n$ be the number of first common digits in the $Q_s$-representations of $x_0$ and $x$, i.e., $x_0 = \Delta^{Q_s}_{b_1\dots b_n b_{n+1}\dots}$,
$x=\Delta^{Q_s}_{b_1\dots b_n c_{n+1}\dots}$, where $b_{n+1}\neq c_{n+1}$. For a $Q_s$-unary point $x_0$, $x\to x_0$ is equivalent to $n \to \infty$. Then
    \begin{equation*}
        \frac{|f(x_0)-f(x)|}{|x_0-x|^\alpha}=\prod^n_{i=1} \frac{|g_{b_i}|}{q_{b_i}^\alpha} \cdot \frac{\left|f\left(\Delta^{Q_s}_{b_{n+1}\dots}\right)-f\left(\Delta^{Q_s}_{c_{n+1}\dots}\right)\right|}{\left|\Delta^{Q_s}_{b_{n+1}\dots}-\Delta^{Q_s}_{c_{n+1}\dots}\right|^\alpha}.
    \end{equation*}
Denote $N_i(x_0,n)$ by $N_i(n)$. Thus, we obtain
\begin{equation*}
\prod^n_{i=1} \frac{|g_{b_i}|}{q_{b_i}^\alpha}=\left(\frac{|g_0|}{q_0^\alpha}\right)^{N_0(n)}\cdots \left(\frac{|g_{s-1}|}{q_{s-1}^\alpha}\right)^{N_{s-1}(n)}=\left(\left(\frac{|g_0|}{q_0^\alpha}\right)^\frac{N_0(n)}{n}\cdots \left(\frac{|g_{s-1}|}{q_{s-1}^\alpha}\right)^\frac{N_{s-1}(n)}{n}\right)^n.
\end{equation*}
Moreover, 
\begin{gather*}
\frac{\left|f\left(\Delta^{Q_s}_{b_{n+1}\dots}\right)-f\left(\Delta^{Q_s}_{c_{n+1}\dots}\right)\right|}{\left|\Delta^{Q_s}_{b_{n+1}\dots}-\Delta^{Q_s}_{c_{n+1}\dots}\right|^{\alpha}} \leq \frac{M-m}{\min\left\{\Delta^{Q_s}_{b_{n+1}\ldots}-\Delta^{Q_s}_{b_{n+1}(0)},\quad \Delta^{Q_s}_{b_{n+1}(s-1)}-\Delta^{Q_s}_{b_{n+1}\ldots}\right\}},
\end{gather*} 
where $M=\max\limits_{x\in[0,1]}f(x)$ and $m=\min\limits_{x\in[0,1]}f(x)$. Denote $q_{\min}=\min\limits_{i\in A_s} q_i$ and $t_i(n)=t_i(x_0,n)$. Then
\begin{gather*}
\Delta^{Q_s}_{b_{n+1}\ldots}-\Delta^{Q_s}_{b_{n+1}(0)}\geq \Delta^{Q_s}_{b_{n+1}\!\!\!\!\!\underbrace{\scriptstyle 0\ldots 0}_{t_{0}(n+1)+1}\!\!\!\!\!(s-1)}-\Delta^{Q_s}_{b_{n+1}(0)}=q_{b_{n+1}}\cdot q_{0}^{t_{0}(n+1)+1}\geq q_{\min}^{t_{0}(n+1)+2},\\
\Delta^{Q_s}_{b_{n+1}(s-1)}-\Delta^{Q_s}_{b_{n+1}\ldots}\geq\Delta^{Q_s}_{b_{n+1}(s-1)}-\Delta^{Q_s}_{b_{n+1}\underbrace{\scriptstyle [s-1]\ldots [s-1]}_{t_{s-1}(n+1)+1}(0)}=q_{b_{n+1}}\cdot q_{s-1}^{t_{s-1}(n+1)+1}\geq q_{\min}^{t_{s-1}(n+1)+2}.
\end{gather*}
Hence
\begin{gather*}
\frac{\left|f\left(\Delta^{Q_s}_{b_{n+1}\dots}\right)-f\left(\Delta^{Q_s}_{c_{n+1}\dots}\right)\right|}{\left|\Delta^{Q_s}_{b_{n+1}\dots}-\Delta^{Q_s}_{c_{n+1}\dots}\right|^{\alpha}} \leq \frac{M-m}{q_{\min}^{\max\left\{t_{0}(n+1),~ t_{s-1}(n+1)\right\}+2}}\leq \frac{M-m}{q_{\min}^{t_{0}(n+1)+ t_{s-1}(n+1)+2}}.
\end{gather*}
Thus,
\begin{align*}
     \frac{|f(x_0)-f(x)|}{|x_0-x|^\alpha}\leq \left(\left(\frac{|g_0|}{q_0^\alpha}\right)^\frac{N_0(n)}{n}\cdots \left(\frac{|g_{s-1}|}{q_{s-1}^\alpha}\right)^\frac{N_{s-1}(n)}{n}\cdot\frac{1}{q_{\min}^\frac{t_{0}(n+1)+t_{s-1}(n+1)}{n}} \right)^n\cdot  \frac{M-m}{q_{\min}^{2}}.
\end{align*}
Since $\nu_0,~\nu_{s-1}<1$, by Lemma \ref{ti}
\begin{gather*}\label{ratioto0}
    \lim_{n\to\infty} \frac{t_0(n+1)+t_{s-1}(n+1)}{n}=0.
\end{gather*}
Using $\lim\limits_{n\to\infty}\frac{N_i(n)}{n}=\nu_i$, we obtain
\begin{equation*}
    \lim_{n\to\infty} \left(\left(\frac{|g_0|}{q_0^\alpha}\right)^\frac{N_0(n)}{n}\cdots \left(\frac{|g_{s-1}|}{q_{s-1}^\alpha}\right)^\frac{N_{s-1}(n)}{n}\cdot\frac{1}{q_{\min}^\frac{t_{0}(n+1)+t_{s-1}(n+1)}{n}}\right)=\left(\frac{|g_0|}{q_0^\alpha}\right)^{\nu_0} \cdots \left(\frac{|g_{s-1}|}{q_{s-1}^\alpha}\right)^{\nu_{s-1}}.
\end{equation*}
For $f$ to satisfy the H\"older condition at $x_0$ with exponent $\alpha$, it is sufficient that
\begin{equation*}
\left(\frac{|g_0|}{q_0^\alpha}\right)^{\nu_0} \cdots \left(\frac{|g_{s-1}|}{q_{s-1}^\alpha}\right)^{\nu_{s-1}}<1,  
\end{equation*}
which is equivalent to $\alpha<L$.
\end{proof}

\begin{theorem}
Let $x_0$ be a $Q_s$-unary number whose $Q_s$-representation has digit frequencies $\nu_i$, $i \in A_s$, with $\nu_0,~\nu_{s-1}<1$. Then the function $f$ does not satisfy the H\"older condition with the exponent $\alpha>L$ at $x_0$.  
\end{theorem}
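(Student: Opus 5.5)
To show that the H\"older condition with exponent $\alpha>L$ fails at $x_0=\Delta^{Q_s}_{b_1b_2\ldots}$, I will exhibit a sequence $x_n\to x_0$ along which $|f(x_0)-f(x_n)|/|x_0-x_n|^\alpha\to\infty$. The natural choice is the endpoints of the cylinders containing $x_0$. For each $n$ the point $x_0$ lies in $\Delta^{Q_s}_{b_1\ldots b_n}=[a_n,c_n]$. By \eqref{easy2} and \eqref{f01},
\begin{equation*}
f(c_n)-f(a_n)=\prod_{j=1}^n g_{b_j}\,\bigl(f(1)-f(0)\bigr)=\prod_{j=1}^n g_{b_j},
\end{equation*}
while $c_n-a_n=\prod_{j=1}^n q_{b_j}$. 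By the triangle inequality, at least one endpoint $x_n\in\{a_n,c_n\}$ satisfies $|f(x_0)-f(x_n)|\geq \tfrac12\prod_{j=1}^n|g_{b_j}|$. For that endpoint, $|x_0-x_n|\leq \prod_{j=1}^n q_{b_j}$.

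Combining these two bounds gives
\begin{equation*}
\frac{|f(x_0)-f(x_n)|}{|x_0-x_n|^\alpha}\geq \frac12\prod_{j=1}^n\frac{|g_{b_j}|}{q_{b_j}^\alpha}=\frac12\left(\left(\frac{|g_0|}{q_0^\alpha}\right)^{\frac{N_0(n)}{n}}\cdots\left(\frac{|g_{s-1}|}{q_{s-1}^\alpha}\right)^{\frac{N_{s-1}(n)}{n}}\right)^n .
\end{equation*}
The inner base tends to $\prod_i (|g_i|/q_i^\alpha)^{\nu_i}$. This limit is $>1$ exactly when $\sum\nu_i\ln|g_i|>\alpha\sum\nu_i\ln q_i$. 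Since $\sum\nu_i\ln q_i<0$, that condition is equivalent to $\alpha>L$. Hence the base is eventually bounded below by some constant $\rho>1$, and the ratio grows at least like $\rho^n/2\to\infty$. Taking logarithms, the same estimate can be phrased through the $\liminf$ formula for $\alpha_f(x_0)$.

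The one step that needs care is ensuring $x_n\neq x_0$ and $x_n\to x_0$. Convergence is immediate, since $|x_0-x_n|\leq\prod_{j=1}^n q_{b_j}\to 0$ by property 4 of the cylinders. For $x_n\neq x_0$: if $x_n=x_0$ then $f(x_n)-f(x_0)=0$, contradicting the lower bound $\tfrac12\prod_{j=1}^n|g_{b_j}|>0$. So $x_n\neq x_0$ automatically, and in any case $x_0$, being $Q_s$-unary, is not a cylinder endpoint. The hypotheses $\nu_0,\nu_{s-1}<1$ are not really needed for this direction. I expect no serious obstacle; the main point is the endpoint trick that replaces controlling $f(x_0)-f(x)$ for a specific $x$.
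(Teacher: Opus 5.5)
Your proof is correct and is essentially the paper's argument: both compare $x_0$ with a point of the rank-$n$ cylinder $\Delta^{Q_s}_{b_1\ldots b_n}$ to get the lower bound $\frac12\prod_{j=1}^n |g_{b_j}|/q_{b_j}^{\alpha}$. The paper picks a point whose tail $f$-value differs from that of $x_0$ by exactly $\frac12$ (using that the range of $f$ contains $[0,1]$), while your triangle-inequality choice of a cylinder endpoint gives the same bound a bit more directly. One small slip: your aside that a $Q_s$-unary point is never a cylinder endpoint fails for $x_0=0$ and $x_0=1$ (which $\nu_0,\nu_{s-1}<1$ excludes here). This is harmless, because your first argument for $x_n\neq x_0$, via the nonzero $f$-difference, already covers every case.
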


\begin{proof}
Let $x_0=\Delta^{Q_s}_{b_1 b_2\ldots}$ be a fixed $Q_s$-unary number, and let $(x_n)^{\infty}_{n=1}$ be a sequence given by
\begin{equation*}
x_n=\Delta^{Q_s}_{b_1 \ldots b_n c^{(n)}_{1} c^{(n)}_{2}\ldots}, 
\end{equation*}
moreover, 
\begin{equation*}
   \left|f\left(\Delta^{Q_s}_{b_{n+1}b_{n+2}\ldots}\right)-f\left(\Delta^{Q_s}_{c^{(n)}_{1}c^{(n)}_{2}\ldots}\right)\right|=\frac{1}{2}.
\end{equation*}
This sequence $(x_n)^{\infty}_{n=1}$ exists because the range of the function $f$ is an interval that contains the interval $[0,1]$. Therefore, we obtain
\begin{equation*}
     \frac{|f(x_0)-f(x_n)|}{|x_0-x_n|^\alpha}=\prod^n_{i=1} \frac{|g_{b_i}|}{q_{b_i}^\alpha} \cdot \frac{\left|f\left(\Delta^{Q_s}_{b_{n+1}b_{n+2}\ldots}\right)-f\left(\Delta^{Q_s}_{c^{(n)}_{1}c^{(n)}_{2}\ldots}\right)\right|}{\left|\Delta^{Q_s}_{b_{n+1}b_{n+2}\ldots}-\Delta^{Q_s}_{c^{(n)}_{1}c^{(n)}_{2}\ldots}\right|^\alpha}\geq \frac{1}{2}\cdot \left(\left(\frac{|g_0|}{q_0^\alpha}\right)^\frac{N_0(n)}{n}\cdots \left(\frac{|g_{s-1}|}{q_{s-1}^\alpha}\right)^\frac{N_{s-1}(n)}{n}\right)^n
\end{equation*}
Under the condition $\alpha>L$,
\begin{equation*}
    \lim_{n\to\infty} \left(\left(\frac{|g_0|}{q_0^\alpha}\right)^\frac{N_0(n)}{n}\cdots \left(\frac{|g_{s-1}|}{q_{s-1}^\alpha}\right)^\frac{N_{s-1}(n)}{n}\right)=\left(\frac{|g_0|}{q_0^\alpha}\right)^{\nu_0}\cdots \left(\frac{|g_{s-1}|}{q_{s-1}^\alpha}\right)^{\nu_{s-1}}>1,
\end{equation*}
therefore, $\lim\limits_{n\to\infty}\frac{|f(x_0)-f(x_n)|}{|x_0-x_n|^\alpha}=\infty$ and $\limsup\limits_{x\to x_0}\frac{|f(x_0)-f(x)|}{|x_0-x|^\alpha}=\infty$.
\end{proof}

\begin{corollary}
Let $x$ be a $Q_s$-unary number whose $Q_s$-representation has digit frequencies $\nu_i$ for all $i \in A_s$ with $\nu_0,~\nu_{s-1}<1$. Then the local H\"older exponent $\alpha_f(x)$ of the function $f$ at the point $x$ is equal to $L$.
\end{corollary}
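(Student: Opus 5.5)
The corollary follows by combining the two preceding theorems with the definition of the local H\"older exponent as a supremum. Set
\begin{equation*}
S(x)=\left\{\alpha>0\colon \limsup_{y\to x}\frac{|f(x)-f(y)|}{|x-y|^\alpha}<\infty\right\},\qquad \alpha_f(x)=\sup S(x),
\end{equation*}
with the convention $\alpha_f(x)=0$ if $S(x)=\emptyset$. Under the hypotheses of the corollary ($x$ is $Q_s$-unary, the frequencies exist, and $\nu_0,\nu_{s-1}<1$), Theorem~\ref{9} gives $(0,L)\subseteq S(x)$. The subsequent theorem shows that every $\alpha>L$ lies outside $S(x)$. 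Hence $S(x)$ is contained in $(0,L]$ and contains $(0,L)$, so $\sup S(x)=L$.

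Before this, I will check that $L$ is well defined and positive. The denominator $\sum_i\nu_i\ln q_i$ is strictly negative: all $q_i\in(0,1)$ since $s\ge2$ and $Q_s$ is positive, and the $\nu_i$ sum to $1$. The numerator is also negative, because $0<|g_i|<1$. Therefore $L\in(0,\infty)$, the interval $(0,L)$ is nonempty, and the convention $\alpha_f(x)=0$ never applies.

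The one point that needs care is that the paper uses two forms of the exponent: $\sup S(x)$, and $\liminf_{y\to x}\frac{\ln|f(x)-f(y)|}{\ln|x-y|}$. If the corollary is read through the $\liminf$ formula, I will translate each theorem into it directly.
\begin{enumerate}
\item[(a)] For $\alpha<L$, the bound $|f(x)-f(y)|\le C|x-y|^\alpha$ holds near $x$. Taking logarithms (both are negative) gives $\frac{\ln|f(x)-f(y)|}{\ln|x-y|}\ge\alpha+\frac{\ln C}{\ln|x-y|}$, so the $\liminf$ is at least $\alpha$.
\item[(b)] For $\alpha>L$, the sequence $x_n$ built in the second theorem satisfies $\frac{|f(x)-f(x_n)|}{|x-x_n|^\alpha}\to\infty$, so the ratio of logarithms along $x_n$ is eventually at most $\alpha$. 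Hence the $\liminf$ is at most $\alpha$.
\end{enumerate}
Letting $\alpha\uparrow L$ in (a) and $\alpha\downarrow L$ in (b) gives $\alpha_f(x)=L$ under either description.
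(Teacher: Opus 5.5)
Your proposal is correct and matches the paper, which states this corollary as an immediate consequence of Theorem~\ref{9} (H\"older continuity at $x$ for every $\alpha<L$) and the following theorem (failure for every $\alpha>L$), so the supremum defining $\alpha_f(x)$ equals $L$. Your checks that $L\in(0,\infty)$ and that the $\liminf$-of-logarithms form of the exponent also gives $L$ are sound details that the paper leaves implicit.
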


\begin{corollary}[\cite{Dubuc2018}]
For Lebesgue almost all $x\in[0,1]$, the local H\"older exponent of the function $f$ at the point $x$ is equal to 
\begin{equation*}
    \alpha_f(x)=\frac{\sum\limits_{i=0}^{s-1} q_i\ln|g_i|}{\sum\limits_{i=0}^{s-1} q_i\ln q_i}.
\end{equation*}
\end{corollary}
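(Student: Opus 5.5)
The plan is to derive this corollary directly from the preceding corollary (local H\"older exponent equals $L$ at $Q_s$-unary points with digit frequencies $\nu_i$ and $\nu_0,\nu_{s-1}<1$), combined with Proposition~\ref{frequency}. The idea is to show that Lebesgue almost every $x\in[0,1]$ satisfies the hypotheses of that corollary with $\nu_i=q_i$. Substituting $\nu_i=q_i$ into $L$ then gives exactly the claimed value.

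First I will dispose of the $Q_s$-binary points. By the main theorem on $Q_s$-representation, such points have a representation ending in $(0)$ or $(s-1)$. For each finite prefix $\alpha_1\ldots\alpha_n$ there is only one such point, so the binary points form a countable set and hence have Lebesgue measure zero. Consequently almost every $x$ is $Q_s$-unary, and its frequencies are unambiguously defined.

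Next, by Proposition~\ref{frequency} there is a set $E\subset[0,1]$ of full Lebesgue measure on which $\nu_i(x)=q_i$ for all $i\in A_s$. Since $Q_s$ is a positive stochastic vector and $s\ge 2$, each $q_i$ satisfies $0<q_i<1$; in particular $\nu_0=q_0<1$ and $\nu_{s-1}=q_{s-1}<1$. Intersecting $E$ with the set of $Q_s$-unary numbers still leaves a set of full measure, and every point of this intersection satisfies all hypotheses of the previous corollary. Hence $\alpha_f(x)=L$ with $\nu_i=q_i$, which is the stated formula.

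The only point requiring care is that $L$ is well defined: the denominator $\sum q_i\ln q_i$ is strictly negative because every $q_i\in(0,1)$, so it is nonzero. Beyond that there is no real obstacle; the substantive work was already done in Theorem~\ref{9} and the theorem following it.
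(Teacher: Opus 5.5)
Your proposal is correct and takes the same route as the paper: the paper also derives this from the identity $\alpha_f(x)=L$ at $Q_s$-unary points with $\nu_0,\nu_{s-1}<1$ (Theorem~\ref{9} together with the converse theorem that follows it), combined with Proposition~\ref{frequency} at $\nu_i=q_i$. Your extra checks are details the paper leaves implicit: that the $Q_s$-binary points form a countable null set, that $0<q_i<1$ forces $\nu_0,\nu_{s-1}<1$, and that the denominator of $L$ is nonzero.
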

This corollary follows directly from Theorem \ref{9} and Proposition \ref{frequency}.

\begin{corollary}
If $\sum\limits_{i=0}^{s-1} q_i\ln|g_i|<\sum\limits_{i=0}^{s-1} q_i\ln q_i$, then $f$ is singular.
\end{corollary}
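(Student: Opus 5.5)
The plan is to show that, under the stated hypothesis, $f$ has derivative $0$ at Lebesgue almost every point of $[0,1]$. Since $f$ is continuous and, when some $g_i<0$, has unbounded variation, I read ``singular'' as in the paper's usage: a continuous non-constant function whose derivative is zero almost everywhere. (It is non-constant because $f(0)=0\neq 1=f(1)$.)

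First I apply the preceding corollary. By Proposition \ref{frequency}, for almost all $x$ we have $\nu_i(x)=q_i$. The binary points are countable, so we may also assume $x$ is $Q_s$-unary. Since $s\ge2$ and all $q_i>0$, we get $\nu_0,\nu_{s-1}<1$. For such $x$, Theorem \ref{9} shows that $f$ satisfies the Hölder condition at $x$ with every exponent
\begin{equation*}
\alpha<L=\frac{\sum_{i} q_i\ln|g_i|}{\sum_{i} q_i\ln q_i}.
\end{equation*}
Both sums are negative, because $|g_i|<1$ and $q_i<1$. So the hypothesis $\sum_i q_i\ln|g_i|<\sum_i q_i\ln q_i$ becomes $L>1$ after dividing by the negative denominator.

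Next I fix $\alpha$ with $1<\alpha<L$. At almost every $x$ there are $C$ and $\varepsilon>0$ such that $|f(x)-f(y)|\le C|x-y|^\alpha$ whenever $|x-y|<\varepsilon$. Dividing by $|x-y|$ gives
\begin{equation*}
\frac{|f(y)-f(x)|}{|y-x|}\le C|y-x|^{\alpha-1}\to0 \quad (y\to x),
\end{equation*}
so $f'(x)=0$ almost everywhere. Together with continuity and $f(0)\ne f(1)$, this is singularity.

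The only step that needs care is the first one: Theorem \ref{9} really gives a finite $\limsup$ of the Hölder quotient, which is all the second step uses. Its proof yields a bound by $K\cdot r^n$ with $r<1$ eventually, so the $\limsup$ is in fact $0$. Beyond that I see no real obstacle; the rest is the routine sign check above, together with noting that the countable set of $Q_s$-binary points does not affect an almost-everywhere statement.
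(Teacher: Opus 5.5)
Your proposal is correct and follows the route the paper intends; the paper leaves this corollary as an immediate consequence of Theorem \ref{9} and Proposition \ref{frequency}. Almost every $x$ is $Q_s$-unary with $\nu_i(x)=q_i$, the hypothesis is equivalent to $L>1$, and H\"older continuity at $x$ with some $\alpha\in(1,L)$ forces $f'(x)=0$. One caveat matters precisely because you need $\alpha>1$: the paper's proof of Theorem \ref{9} bounds the remainder by $(M-m)/\min\{\cdots\}$ without the power $\alpha$, which is only valid for $\alpha\le 1$. Restoring it replaces $q_{\min}^{t_0(n+1)+t_{s-1}(n+1)+2}$ by its $\alpha$-th power, and this changes nothing since $(t_0(n+1)+t_{s-1}(n+1))/n\to0$, so your claimed bound of the form $\mathrm{const}\cdot r^n$ with $r<1$ does hold.
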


\begin{figure}[h]
	\centering
	\includegraphics[width=0.65\textwidth]{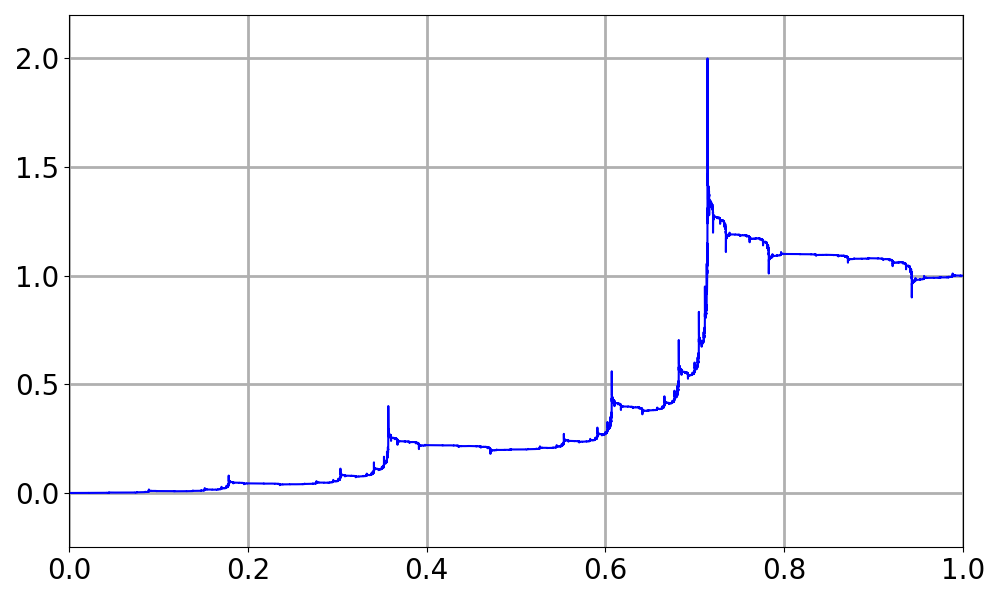}
	\caption{\centering The graph of the nowhere monotonic singular function $f$ for $q_0=\frac{1}{2}$, $q_1=\frac{3}{10}$, $q_2=\frac{1}{5}$ and $g_0=\frac{1}{5}$, $g_1=\frac{9}{10}$, $g_2=-\frac{1}{10}$.}
	\label{graphs1}
\end{figure}

Whether the H\"older condition with the exponent $\alpha = L$ holds at a $Q_s$-unary point $x$ depends on other properties of the sequences $\frac{N_i(x,n)}{n}$ to $\nu_i(x)$ and requires a more detailed analysis. However, this is beyond the scope of the~present study.

Denote by $K$ the quantity
\begin{equation*}
K=\min\left\{\frac{\ln |g_0|}{\ln q_0},\frac{\ln |g_{s-1}|}{\ln q_{s-1}}\right\}.
\end{equation*}

\begin{theorem}
The function $f$ satisfies the H\"older condition at a $Q_s$-binary point $x_0$ with exponent $\alpha\leq K$.
\end{theorem}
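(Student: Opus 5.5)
We treat a $Q_s$-binary point $x_0$ one side at a time, using whichever of its two $Q_s$-representations matches that side. Write $x_0=\Delta^{Q_s}_{b_1\ldots b_n(0)}=\Delta^{Q_s}_{b_1\ldots b_{n-1}[b_n-1](s-1)}$ with $b_n\neq0$.

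For $x>x_0$ close to $x_0$ we use the representation ending in $(0)$. Then $x=\Delta^{Q_s}_{b_1\ldots b_n 0\ldots0\,c_{m+1}\ldots}$, with $m-n$ zeros after $b_n$ and $c_{m+1}\neq 0$, and $m\to\infty$ as $x\to x_0^+$. Set $P=\prod_{j=1}^n g_{b_j}$ and $Q=\prod_{j=1}^n q_{b_j}$. By \eqref{easy2},
\begin{equation*}
f(x)-f(x_0)=P\,g_0^{m-n}\left(f\left(\Delta^{Q_s}_{c_{m+1}\ldots}\right)-f\left(\Delta^{Q_s}_{(0)}\right)\right),
\end{equation*}
because $f(\Delta^{Q_s}_{(0)})=0$ by \eqref{f01} and the digit sequence $(0)$ reproduces itself. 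Similarly $x-x_0=Q\,q_0^{m-n}\,\Delta^{Q_s}_{c_{m+1}\ldots}$.

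We need a lower bound on $\Delta^{Q_s}_{c_{m+1}\ldots}$. Since $c_{m+1}\geq1$, we have $\Delta^{Q_s}_{c_{m+1}\ldots}\geq\beta_1=q_0$. For the numerator we use only $|f(y)|\leq \max(|M|,|m|)=:C$, where $M,m$ are the extrema of $f$ on $[0,1]$ (continuity gives that $C$ is finite). Therefore
\begin{equation*}
\frac{|f(x)-f(x_0)|}{|x-x_0|^\alpha}\leq \frac{|P|\,C}{Q^\alpha q_0^\alpha}\left(\frac{|g_0|}{q_0^{\alpha}}\right)^{m-n}.
\end{equation*}
If $\alpha\leq \ln|g_0|/\ln q_0$, then $q_0^\alpha\geq|g_0|$ (both logarithms are negative), so the bracketed ratio is at most $1$. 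The right-hand side is then bounded uniformly in $m$, and the limsup from the right is finite.

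For $x<x_0$ we use the representation ending in $(s-1)$. Now $x=\Delta^{Q_s}_{b_1\ldots b_{n-1}[b_n-1]\,[s-1]\ldots[s-1]\,c_{m+1}\ldots}$ with $c_{m+1}<s-1$. The same computation uses $f(\Delta^{Q_s}_{(s-1)})=1$ and $x_0-x=Q'q_{s-1}^{m-n}\,(1-\Delta^{Q_s}_{c_{m+1}\ldots})$. Since $c_{m+1}\leq s-2$, we get $1-\Delta^{Q_s}_{c_{m+1}\ldots}\geq 1-\beta_{s-1}=q_{s-1}$, and the numerator is bounded by $1+C$. The ratio $|g_{s-1}|/q_{s-1}^\alpha\leq 1$ exactly when $\alpha\leq \ln|g_{s-1}|/\ln q_{s-1}$.

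Combining the two sides, both one-sided limsups are finite whenever $\alpha\leq K$, which proves the claim.

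The only potentially delicate point is that the denominator must not degenerate when $x$ lies very close to $x_0$ within a deeper cylinder. Choosing $m$ as the exact length of the common prefix, including the run of $0$'s (respectively $(s-1)$'s), resolves this: the first differing digit forces the tail distance to be at least $q_0$ (respectively $q_{s-1}$). Note that unlike the unary case, no frequency argument and no use of Lemma~\ref{ti} is needed, because the tail of $x_0$ is constant.
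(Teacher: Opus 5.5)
Your proof is correct and is essentially the paper's argument: you treat each side separately, using the representation with period $(0)$ on the right and period $(s-1)$ on the left, factor out the common prefix via \eqref{easy2}, bound the tail denominator below by $q_0$ (resp.\ $q_{s-1}$), and conclude from $|g_0|\le q_0^\alpha$ (resp.\ $|g_{s-1}|\le q_{s-1}^\alpha$). Your numerator bounds via $\max_{[0,1]}|f|$ and $1+\max_{[0,1]}|f|$ are slightly more careful than the paper's use of $M$ alone, though any finite constant suffices there.
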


\begin{proof}
Since $x_0$ is a $Q_s$-binary number, we have $x_0 = \Delta^{Q_s}_{b_1 \ldots b_{k-1}b_k(0)}=\Delta^{Q_s}_{b_1\ldots b_{k-1}b_k-1
}$, where $b_k\neq 0$.

If $x>x_0$ and is sufficiently close to $x_0$, then
$x = \Delta^{Q_s}_{b_1\dots b_{k} \underbrace{\scriptstyle 0\ldots0}_{n-k}c_{n+1}\dots}$, where $c_{n+1} \neq 0$.
Here, $n$ is the number of first common digits in the $Q_s$-representation of $x$ and the $Q_s$-representation of $x_0$ with period $(0)$. In this case, the~condition $x\to x_0+$ is equivalent to $n\to \infty$. Analogously to the proof of Theorem \ref{9}, we have
\begin{equation*}
        \frac{|f(x_0)-f(x)|}{|x_0-x|^\alpha}=\left(\frac{|g_0|}{q_0^\alpha}\right)^{n-k}\cdot\prod^k_{i=1} \frac{|g_{b_i}|}{q_{b_i}^\alpha} \cdot \frac{\left|f\left(\Delta^{Q_s}_{(0)}\right)-f\left(\Delta^{Q_s}_{c_{n+1}\dots}\right)\right|}{\left|\Delta^{Q_s}_{(0)}-\Delta^{Q_s}_{c_{n+1}\dots}\right|^\alpha}.
\end{equation*}
Clearly, 
\begin{equation*}
\prod^k_{i=1} \frac{|g_{b_i}|}{q_{b_i}^\alpha}=\text{const},    
\end{equation*}
since $k$ is a constant uniquely determined by the number $x_0$. Also,
\begin{equation*}
 \frac{\left|f\left(\Delta^{Q_s}_{(0)}\right)-f\left(\Delta^{Q_s}_{c_{n+1}\dots}\right)\right|}{\left|\Delta^{Q_s}_{(0)}-\Delta^{Q_s}_{c_{n+1}\dots}\right|^\alpha}\leq \frac{M}{q_0^\alpha},
\end{equation*}
where $M=\max\limits_{x\in[0,1]}f(x)$. So, if $\alpha\leq \frac{\ln |g_0|}{\ln q_0}$, then $\frac{|g_0|}{q_0^\alpha}\leq 1$ and $\limsup\limits_{x\to x_0+}\frac{\left|f(x_0)-f(x)\right|}{\left|x_0-x\right|^\alpha}<\infty$.

If $x<x_0$ and is sufficiently close to $x_0$, then
$x = \Delta^{Q_s}_{b_1\dots [b_{k}-1] \underbrace{\scriptstyle [s-1]\ldots [s-1]}_{n-k}c_{n+1}\dots}$, where $c_{n+1} \neq s-1$.
Here, $n$ is the~number of first common digits in the $Q_s$-representation of $x$ and the $Q_s$-representation of $x_0$ with period $(s-1)$. In this case, the condition $x\to x_0-$ is also equivalent to $n\to \infty$. Then
\begin{equation*}
\frac{|f(x_0)-f(x)|}{|x_0-x|^\alpha}=\left(\frac{|g_{s-1}|}{q_{s-1}^\alpha}\right)^{n-k}\cdot\prod^k_{i=1} \frac{|g_{b_i}|}{q_{b_i}^\alpha} \cdot \frac{\left|f\left(\Delta^{Q_s}_{(s-1)}\right)-f\left(\Delta^{Q_s}_{c_{n+1}\dots}\right)\right|}{\left|\Delta^{Q_s}_{(s-1)}-\Delta^{Q_s}_{c_{n+1}\dots}\right|^\alpha}.
\end{equation*}
Clearly, 
\begin{equation*}
\prod^k_{i=1} \frac{|g_{b_i}|}{q_{b_i}^\alpha}=\text{const}, \quad \quad
 \frac{\left|f\left(\Delta^{Q_s}_{(s-1)}\right)-f\left(\Delta^{Q_s}_{c_{n+1}\dots}\right)\right|}{\left|\Delta^{Q_s}_{(s-1)}-\Delta^{Q_s}_{c_{n+1}\dots}\right|^\alpha}\leq \frac{M}{q_{s-1}^\alpha}.
\end{equation*}
Therefore, if $\alpha\leq \frac{\ln |g_{s-1}|}{\ln q_{s-1}}$, then $\frac{|g_{s-1}|}{q^\alpha_{s-1}}\leq 1$ and $\limsup\limits_{x\to x_0-}\frac{\left|f(x_0)-f(x)\right|}{\left|x_0-x\right|^\alpha}<\infty$.
\end{proof}

\begin{theorem}
The function $f$ does not satisfy the H\"older condition at a $Q_s$-binary point $x_0$ with exponent $\alpha>K$.
\end{theorem}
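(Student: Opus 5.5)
Since $\alpha>K$, at least one of the inequalities $\alpha>\frac{\ln|g_0|}{\ln q_0}$ or $\alpha>\frac{\ln|g_{s-1}|}{\ln q_{s-1}}$ holds. I would handle the two cases separately, approaching $x_0$ from the side governed by the offending digit. Write $x_0=\Delta^{Q_s}_{b_1\ldots b_k(0)}=\Delta^{Q_s}_{b_1\ldots[b_k-1](s-1)}$ as in the previous proof.

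Suppose first that $\alpha>\frac{\ln|g_0|}{\ln q_0}$. This means $\frac{|g_0|}{q_0^\alpha}>1$, because $\ln q_0<0$. I would take the right-hand sequence
\[
x_n=\Delta^{Q_s}_{b_1\ldots b_k\underbrace{\scriptstyle 0\ldots0}_{n-k}(s-1)}\to x_0+.
\]
By \eqref{easy2} we have $f(x_n)-f(x_0)=\prod_{i=1}^k g_{b_i}\cdot g_0^{\,n-k}\bigl(f(1)-f(0)\bigr)$. By \eqref{f01} the last factor equals $1$. The corresponding increment is $x_n-x_0=\prod_{i=1}^k q_{b_i}\cdot q_0^{\,n-k}$, the length of the cylinder. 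Therefore
\[
\frac{|f(x_0)-f(x_n)|}{|x_0-x_n|^\alpha}=\prod_{i=1}^k\frac{|g_{b_i}|}{q_{b_i}^\alpha}\cdot\left(\frac{|g_0|}{q_0^\alpha}\right)^{n-k}.
\]
The first factor is a nonzero constant and the second tends to $\infty$, so the $\limsup$ is infinite.

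The case $\alpha>\frac{\ln|g_{s-1}|}{\ln q_{s-1}}$ is symmetric, approaching from the left. I would take
\[
x_n=\Delta^{Q_s}_{b_1\ldots[b_k-1]\underbrace{\scriptstyle [s-1]\ldots[s-1]}_{n-k}(0)}\to x_0-.
\]
The same computation applies, with the first factor now $\prod_{i<k}\frac{|g_{b_i}|}{q_{b_i}^\alpha}\cdot\frac{|g_{b_k-1}|}{q_{b_k-1}^\alpha}$. The growing factor is $\left(\frac{|g_{s-1}|}{q_{s-1}^\alpha}\right)^{n-k}$, and the bracket is again $f(1)-f(0)=1$.

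No step is really an obstacle. The one point to check is that the chosen test points make the bracketed difference exactly $1$ rather than merely bounded. Choosing $x_n$ as the opposite endpoint of the cylinder achieves this via \eqref{f01}, and it avoids any need for the range argument used in the unary case.
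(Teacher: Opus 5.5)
Your proposal is correct and follows essentially the same route as the paper. You split according to which of the two ratios defining $K$ is exceeded, and test along the opposite endpoints of the cylinders $\Delta^{Q_s}_{b_1\ldots b_k0\ldots0}$ (resp.\ $\Delta^{Q_s}_{b_1\ldots b_{k-1}[b_k-1][s-1]\ldots[s-1]}$), so the bracket equals $f(1)-f(0)=1$ and the ratio is a nonzero constant times $\left(|g_0|/q_0^\alpha\right)^{n-k}$ or $\left(|g_{s-1}|/q_{s-1}^\alpha\right)^{n-k}$.

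In the left-hand case your sequence is the right one: it has digit $b_k-1$ in position $k$, and its constant includes $|g_{b_k-1}|/q_{b_k-1}^\alpha$. The paper's displayed $x'_n$ keeps $b_k$ in that position, which is a typo, since such points converge to $\max\Delta^{Q_s}_{b_1\ldots b_k}$ rather than to $x_0$.
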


\begin{proof}
Let $x_0 =\Delta^{Q_s}_{b_1 \ldots b_{k-1}b_k(0)}=\Delta^{Q_s}_{b_1\ldots b_{k-1}[b_k-1](s-1)}$, where $b_k\neq 0$. 
    
If $\alpha > \frac{\ln |g_0|}{\ln q_0}$, consider the $Q_s$-representation of $x_0$ such that it contains the period $(0)$, and the sequence $(x_n)^{\infty}_{n=1}$ given by
\begin{equation*}
x_n=\Delta^{Q_s}_{b_1 \ldots b_k \underbrace{\scriptstyle 0\ldots0}_{n} (s-1)}.
\end{equation*}
Then we obtain
\begin{equation*}
     \frac{|f(x_0)-f(x_n)|}{|x_0-x_n|^\alpha}=\left(\frac{|g_0|}{q_0^\alpha}\right)^n\cdot\prod^k_{i=1} \frac{|g_{b_i}|}{q_{b_i}^\alpha} \cdot \frac{\left|f\left(\Delta^{Q_s}_{(0)}\right)-f\left(\Delta^{Q_s}_{(s-1)}\right)\right|}{\left|\Delta^{Q_s}_{(0)}-\Delta^{Q_s}_{(s-1)}\right|^\alpha} = \left(\frac{|g_0|}{q_0^\alpha}\right)^n\cdot\prod^k_{i=1} \frac{|g_{b_i}|}{q_{b_i}^\alpha}.
\end{equation*}
From the condition $\alpha> \frac{\ln |g_0|}{\ln q_0}$, it follows that $\frac{|g_0|}{q_0^\alpha} >1$. Consequently,
\begin{equation*}
\lim\limits_{n\to\infty}\frac{|f(x_0)-f(x_n)|}{|x_0-x_n|^\alpha}=\infty \qquad \text{and} \qquad \limsup\limits_{x\to x_0}\frac{|f(x_0)-f(x)|}{|x_0-x|^\alpha}=\infty.
\end{equation*} 

If $\alpha > \frac{\ln |g_{s-1}|}{\ln q_{s-1}}$, consider the $Q_s$-representation of $x_0$ that contains the period $(s-1)$ and the sequence $(x'_n)^{\infty}_{n=1}$ given by
\begin{equation*}
x'_n=\Delta^{Q_s}_{b_1 \ldots b_k \underbrace{\scriptstyle [s-1]\ldots[s-1]}_{n} (0)}.
\end{equation*}
Analogously, we obtain
\begin{equation*}
     \frac{|f(x_0)-f(x'_n)|}{|x_0-x'_n|^\alpha} = \left(\frac{|g_{s-1}|}{q_{s-1}^\alpha}\right)^n\cdot\prod^k_{i=1} \frac{|g_{b_i}|}{q_{b_i}^\alpha}.
\end{equation*}
From the condition $\alpha > \frac{\ln |g_{s-1}|}{\ln q_{s-1}}$, it follows that $\frac{|g_{s-1}|}{q_{s-1}^\alpha} >1$. Consequently, 
\begin{equation*}
\lim\limits_{n\to\infty}\frac{|f(x_0)-f(x'_n)|}{|x_0-x'_n|^\alpha}=\infty \qquad \text{and} \qquad \limsup\limits_{x\to x_0}\frac{|f(x_0)-f(x)|}{|x_0-x|^\alpha}=\infty.
\end{equation*}
Thus, for $\alpha > K$ the H\"older condition at the point $x_0$ is not satisfied.
\end{proof}

\begin{corollary}
The local H\"older exponent $\alpha_f(x)$ of the function $f$ at a $Q_s$-binary point $x$ is equal to $K$.
\end{corollary}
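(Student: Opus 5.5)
The corollary follows by combining the two preceding theorems on $Q_s$-binary points with the definition of the local H\"older exponent as a supremum. Fix a $Q_s$-binary point $x$, so $x=\Delta^{Q_s}_{b_1\ldots b_k(0)}=\Delta^{Q_s}_{b_1\ldots b_{k-1}[b_k-1](s-1)}$ with $b_k\neq 0$. Since $|g_i|<1$ and $q_i<1$, both ratios $\frac{\ln|g_0|}{\ln q_0}$ and $\frac{\ln|g_{s-1}|}{\ln q_{s-1}}$ are strictly positive, so $K>0$. This guarantees that the set of admissible exponents is nonempty and the convention $\alpha_f(x)=0$ is not triggered.

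\emph{Lower bound.} By the theorem stating that $f$ satisfies the H\"older condition at a $Q_s$-binary point with every exponent $\alpha\leq K$, each $\alpha\in(0,K]$ belongs to the set
\begin{equation*}
\left\{\alpha>0\colon \limsup_{t\to x}\frac{|f(x)-f(t)|}{|x-t|^\alpha}<\infty\right\}.
\end{equation*}
Hence its supremum satisfies $\alpha_f(x)\geq K$. In fact $K$ itself is attained, because that proof only needs $\frac{|g_0|}{q_0^{\alpha}}\leq1$ and $\frac{|g_{s-1}|}{q_{s-1}^{\alpha}}\leq1$, which hold for both one-sided limits at $\alpha=K$.

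\emph{Upper bound.} By the following theorem, for every $\alpha>K$ the H\"older condition fails at $x$. Indeed, if the minimum in $K$ is attained by the index $0$, the sequence $x_n=\Delta^{Q_s}_{b_1\ldots b_k 0\ldots0(s-1)}$ makes the H\"older quotient blow up like $\left(|g_0|/q_0^\alpha\right)^n$. Otherwise the symmetric sequence built from the period $(s-1)$ does the same. So no $\alpha>K$ lies in the set above, which gives $\alpha_f(x)\leq K$. Combining the two bounds yields $\alpha_f(x)=K$.

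The only point needing care is that the supremum in the definition is taken over one set for the two-sided limit $t\to x$, while the earlier proofs work separately with $t\to x+$ (using the $(0)$-tailed representation) and $t\to x-$ (using the $(s-1)$-tailed one). The two-sided $\limsup$ is the maximum of the one-sided ones. It is therefore finite exactly when $\alpha\leq\frac{\ln|g_0|}{\ln q_0}$ and $\alpha\leq\frac{\ln|g_{s-1}|}{\ln q_{s-1}}$, i.e. when $\alpha\leq K$. This is exactly how the two theorems combine, so no further estimate is needed.
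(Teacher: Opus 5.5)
Your proposal is correct and matches the paper's approach: the corollary is read off directly from the two preceding theorems, one showing the H\"older condition holds at a $Q_s$-binary point for every $\alpha\leq K$ and the other showing it fails for every $\alpha>K$. Your remarks that $K>0$ and that the two-sided $\limsup$ is the maximum of the two one-sided ones only make explicit what the paper leaves implicit.
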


\section{Global extrema of the function $f$}
In the case where the function $f$ is nowhere monotonic, finding its global extrema is a non-trivial task. In \cite{PrGLSv2022}, global extrema were found only in the simplest case, namely when $0<\delta_i<1$ for all $i\in A_s\setminus\{0\}$.

In this section, we consider the function $f$ satisfying the following additional conditions:
\begin{equation}\label{conditions}
    \begin{cases}
      g_k<0 \text{ and } \delta_k>1 \text{ for some }k\in A_s,\\
      g_i>0\text{ for all }i\in A_s\setminus\{k\}.
    \end{cases}
\end{equation}
Under these conditions, $k\geq2$, $\delta_i>0$ for all $i\in A_s\setminus\{0\}$, and the function $f$ is nowhere monotonic. Moreover,
$$M=\max\limits_{x\in[0,1]}f(x)\geq f\left(\Delta^{Q_s}_{k(0)}\right)=\delta_{k}>1.$$

\begin{lemma}\label{lemmamaxmin}
Let $M=\max\limits_{x\in[0,1]}f(x)$ and $m=\min\limits_{x\in[0,1]}f(x)$. Then
\begin{equation*}
\max_{x\in\Delta^{Q_s}_{i}}f(x)=\delta_i+g_iM,\quad \min_{x\in\Delta^{Q_s}_{i}}f(x)=\delta_i+g_im
\end{equation*}
for $i\in A_s\setminus\{k\}$ and
\begin{equation*}
\max_{x\in\Delta^{Q_s}_{k}}f(x)=\delta_{k}+g_{k}m, \quad \min_{x\in\Delta^{Q_s}_{k}}f(x)=\delta_{k}+g_{k}M.
\end{equation*}
\end{lemma}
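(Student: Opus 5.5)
The argument rests on the self-affinity relation \eqref{easy1} with $n=1$. For every $x\in\Delta^{Q_s}_{i}$ we can write $x=\Delta^{Q_s}_{i\alpha_2\alpha_3\ldots}$, and then
\begin{equation*}
f(x)=\delta_i+g_i\, f\left(\Delta^{Q_s}_{\alpha_2\alpha_3\ldots}\right).
\end{equation*}
The map $\Delta^{Q_s}_{i\alpha_2\alpha_3\ldots}\mapsto\Delta^{Q_s}_{\alpha_2\alpha_3\ldots}$ is the affine bijection $x\mapsto (x-\beta_i)/q_i$ from $\Delta^{Q_s}_i=[\beta_i,\beta_i+q_i]$ onto $[0,1]$. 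This follows from the cylinder properties and from the series \eqref{Q-representation}. Here the well-definedness of $f$ at $Q_s$-binary points, established in \cite{PrK2013}, ensures that the choice of representation does not matter. Consequently
\begin{equation*}
f\left(\Delta^{Q_s}_i\right)=\left\{\delta_i+g_i y\colon y\in f([0,1])\right\}.
\end{equation*}

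By continuity, $f([0,1])=[m,M]$, and both extremes are attained. The image of $[m,M]$ under $y\mapsto\delta_i+g_iy$ is an interval. If $g_i>0$, which holds for all $i\in A_s\setminus\{k\}$ by \eqref{conditions}, the map is increasing. Its maximum over $\Delta^{Q_s}_i$ is then $\delta_i+g_iM$, attained at the preimage of a point where $f=M$, and its minimum is $\delta_i+g_im$. If $i=k$, then $g_k<0$ and the map is decreasing, so the roles of $M$ and $m$ swap. This gives $\max_{\Delta^{Q_s}_k}f=\delta_k+g_km$ and $\min_{\Delta^{Q_s}_k}f=\delta_k+g_kM$.

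There is no real obstacle. The only point needing care is that the shift map is surjective onto $[0,1]$, so that the extreme values $M$ and $m$ are actually realized inside each first-rank cylinder. This follows from property 5 of cylinders: every sequence in $A_s^{\mathbb{N}}$ represents a point, so prefixing the digit $i$ to a representation of a point where $f$ attains $M$ (or $m$) produces a point of $\Delta^{Q_s}_i$ with the required value.
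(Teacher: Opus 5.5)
Your proof is correct and follows the paper's argument, which also derives the lemma directly from \eqref{easy1} with $n=1$ together with the signs $g_k<0$ and $g_i>0$ for $i\neq k$. Your remarks that the shift maps $\Delta^{Q_s}_i$ onto $[0,1]$, so that $M$ and $m$ are attained inside each first-rank cylinder, spell out a step the paper leaves implicit.
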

The result follows directly from the equality \eqref{easy1} with $n=1$, since $g_{k}<0$ and $g_i>0$ for $i\in A_s\setminus\{k\}$.

\begin{lemma}\label{lemmamin}
$\displaystyle m=\min_{x\in[0,1]} f(x)=\min\left\{0,\delta_{k}+Mg_{k}\right\}$, and $|m|<M$.
\end{lemma}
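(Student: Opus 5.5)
Since $[0,1]=\bigcup_{i\in A_s}\Delta^{Q_s}_i$, we have $m=\min_{i}\min_{x\in\Delta^{Q_s}_i}f(x)$, and Lemma \ref{lemmamaxmin} gives the minimum over each first-rank cylinder. So
\begin{equation*}
m=\min\Bigl\{\min_{i\neq k}(\delta_i+g_im),\ \delta_k+g_kM\Bigr\}.
\end{equation*}
The plan is to compare the terms with $i\neq k$ with one another, show that the smallest of them is the one with $i=0$, namely $\delta_0+g_0m=g_0m$, and then solve the resulting fixed-point relation for $m$.

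First, $m\le f(0)=0$ by \eqref{f01}. For $i\neq k$ we have $g_i>0$, so $g_i m\ge m$ whenever $m\le 0$ and $0<g_i<1$. Moreover, under \eqref{conditions} we have $\delta_i>0$ for $i\neq 0$. Hence every term $\delta_i+g_im$ with $i\neq k$ is at least $g_0m\ge m$.

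Consequently $m=\min\{g_0m,\ \delta_k+g_kM\}$. Now split into two cases.
\begin{itemize}
\item[$\bullet$] If $m=g_0m$, then since $g_0\neq1$ we get $m=0$. In this case $0\le \delta_k+g_kM$.
\item[$\bullet$] Otherwise $m=\delta_k+g_kM<g_0m$. Since $m\le 0$, this is possible only when $m<0$.
\end{itemize}
In both cases $m=\min\{0,\delta_k+g_kM\}$. Negative values of $m$ are exactly what is attained on $\Delta^{Q_s}_k$.

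For the inequality $|m|<M$, recall that $M\ge\delta_k>1$. If $m=0$ there is nothing to prove. Otherwise $|m|=-\delta_k-g_kM=|g_k|M-\delta_k<M$, because $|g_k|<1$ and $\delta_k>0$.

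The main obstacle is the comparison step. One must check that $\delta_i>0$ and $g_i\in(0,1)$ for all $i\neq k$, so that each $\delta_i+g_im\ge g_0m$. One must also check the step excluding $g_0m$ as the minimum when $m<0$: if $m<0$, then $g_0m>m$, so the minimum has to be achieved by the $k$-term. The positivity of $\delta_i$ is asserted right after \eqref{conditions}; if needed I would justify it from $\delta_i=\sum_{j<i}g_j$, where the only negative summand $g_k$ does not enter $\delta_i$ for $i\le k$, and for $i>k$ we have $\delta_i\ge\delta_k+g_k>0$.
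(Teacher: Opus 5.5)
Your conclusion and your proof of $|m|<M$ are fine, but the comparison step is wrong. You claim that every term $\delta_i+g_im$ with $i\neq k$ is at least $g_0m$. Your justification ($\delta_i>0$ and $g_im\ge m$) only gives $\delta_i+g_im\ge m$, not $\ge g_0m$. In fact the claim is false.

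Take $s=3$, $g_0=0.9$, $g_1=0.99$, $g_2=-0.89$. These satisfy \eqref{conditions} with $k=2$, $\delta_1=0.9$, $\delta_2=1.89$. Then $f(\Delta^{Q_3}_{(1)})=\delta_1/(1-g_1)=90$, so $m\le f(\Delta^{Q_3}_{2(1)})=\delta_2+90g_2=-78.21$. Hence $(\delta_1+g_1m)-g_0m=0.9-0.09|m|<0$. So the smallest of the $i\neq k$ terms need not be the $i=0$ term. Your derivation of $m=\min\{g_0m,\delta_k+g_kM\}$ therefore does not stand. That identity is in fact true, but only as a consequence of the lemma itself.

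The repair is the argument the paper uses, and you already have the ingredients. Do not compare the $i\neq k$ terms with each other; compare each one with $m$ directly.

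\begin{itemize}
\item[$\bullet$] If $m<0$, then for every $i\neq k$ we have $\delta_i\ge 0$ and $0<g_i<1$, so $\delta_i+g_im\ge g_im>m$. By Lemma~\ref{lemmamaxmin}, the minimum of $f$ over each $\Delta^{Q_s}_i$ with $i\neq k$ is strictly larger than $m$. So $m$ is attained on $\Delta^{Q_s}_k$, that is, $m=\delta_k+g_kM$.
\item[$\bullet$] If $m=0$, then $\delta_k+g_kM=\min_{x\in\Delta^{Q_s}_k}f(x)\ge 0$.
\end{itemize}

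In both cases $m=\min\{0,\delta_k+g_kM\}$, and your argument for $|m|<M$ then goes through unchanged.
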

\begin{proof}
Clearly,
\begin{equation*}
    m=\min_{i\in A_s}\left\{\min_{x\in\Delta^{Q_s}_i}f(x)\right\}\leq 0.
\end{equation*}
By Lemma \ref{lemmamaxmin}, if $m<0$, then
\begin{equation*}
 \min_{x\in\Delta^{Q_s}_{i}}f(x)=\delta_i+g_im >\delta_i+m \geq m
\end{equation*}
for every $i\in A_s\setminus\{k\}$. Therefore, $m=\min\limits_{x\in\Delta^{Q_s}_{k}}f(x)=\delta_{k}+g_{k}M$. At the same time,
\begin{equation*}
  |m|=-\delta_k-g_kM=-\delta_k+|g_k|M<M. \qedhere
\end{equation*}
\end{proof}

\begin{lemma}\label{lemmaMnot}
$M\not=\max\limits_{x\in\Delta^{Q_s}_{k}}f(x)$.
\end{lemma}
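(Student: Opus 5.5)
Suppose, toward a contradiction, that $M=\max_{x\in\Delta^{Q_s}_{k}}f(x)$. By Lemma \ref{lemmamaxmin} this means
\begin{equation*}
M=\delta_k+g_k m .
\end{equation*}
Lemma \ref{lemmamin} leaves two cases, $m=0$ and $m=\delta_k+g_kM<0$, and the plan is to reach a contradiction in each.

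\emph{Case $m=0$.} Then $M=\delta_k$. I will compare this with the maximum over the neighbouring cylinder $\Delta^{Q_s}_{k-1}$, which exists since $k\ge 2$ and satisfies $g_{k-1}>0$. By Lemma \ref{lemmamaxmin},
\begin{equation*}
\max_{x\in\Delta^{Q_s}_{k-1}}f(x)=\delta_{k-1}+g_{k-1}M=\delta_{k-1}+g_{k-1}\delta_k .
\end{equation*}
Using $\delta_k=\delta_{k-1}+g_{k-1}$, this becomes $\delta_k-g_{k-1}+g_{k-1}\delta_k=\delta_k+g_{k-1}(\delta_k-1)$. Since $\delta_k>1$ and $g_{k-1}>0$, it is strictly larger than $\delta_k=M$, which contradicts the definition of $M$.

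\emph{Case $m<0$.} Now $m=\delta_k+g_kM$ and $M=\delta_k+g_km$, so $M-m=g_k(m-M)$. This forces $g_k=-1$, which contradicts $|g_k|<1$ from \eqref{conditions2}. The argument needs $M\ne m$, and that holds because $M>1>0\ge m$.

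The only delicate point is the case $m=0$. There the maximum inside $\Delta^{Q_s}_k$ is achieved, so no contradiction comes from cylinder $k$ alone. We have to exploit the neighbouring cylinder $k-1$ together with $\delta_k>1$, and the identity $\delta_k=\delta_{k-1}+g_{k-1}$ makes the comparison immediate.
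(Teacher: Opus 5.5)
Your proof is correct and follows the paper's argument. You assume the same contradiction hypothesis and use the same split via Lemma \ref{lemmamin}; in the case $m=0$ you make the same comparison with the value $\delta_{k-1}+g_{k-1}\delta_k>\delta_k$ from the cylinder $\Delta^{Q_s}_{k-1}$, which the paper realizes at the point $\Delta^{Q_s}_{[k-1]k(0)}$. The only cosmetic difference is in the case $m<0$: the paper solves the system to get $M=\delta_k/(1-g_k)<\delta_k$, contradicting $M\ge\delta_k$, whereas you subtract the two equations to get $(1+g_k)(M-m)=0$, which is equally valid.
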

\begin{proof}
Assume, for the sake of contradiction, that $M=\max\limits_{x\in\Delta^{Q_s}_{k}}f(x)$. Then from Lemma \ref{lemmamaxmin} it follows that
\begin{equation*}
M=\delta_{k}+g_{k}m.
\end{equation*}
Using the Lemma \ref{lemmamin}, we obtain two cases:
\begin{equation*}
M=\delta_{k}\quad\text{or}\quad M=\delta_{k}+g_{k}\left(\delta_{k}+g_{k}M\right).
\end{equation*}
Since $\delta_{k}>1$, $k\geq2$ and $0<g_{k-1}<1$, we have
\begin{equation*}
    f\left(\Delta^{Q_s}_{[k-1]k(0)}\right)=\delta_{k-1}+\delta_{k}g_{k-1}>\delta_{k-1}+g_{k-1}=\delta_{k}.
\end{equation*}
Therefore, the case $M=\delta_{k}$ is impossible. In the second case, solving for $M$ from equation $M=\delta_{k}+g_{k}\left(\delta_{k}+g_{k}M\right)$, we obtain
\begin{equation*}
M=\frac{\delta_{k}}{1-g_{k}}=\frac{\delta_{k}}{1+|g_{k}|}<\delta_{k}.
\end{equation*}
Thus, this case is also impossible. Therefore, the assumption is false and $M\not=\max\limits_{x\in\Delta^{Q_s}_{k}}f(x)$.
\end{proof}

\begin{theorem}\label{theoremmax}
The maximum value of the function $f$ is given by
$$M=\max_{i\in A_s}\left\{\frac{\delta_i}{1-g_i}\right\}.$$
\end{theorem}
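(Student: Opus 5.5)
Two inequalities give the result. For the lower bound, observe that for each $i\in A_s$ the point $\Delta^{Q_s}_{(i)}$ is a fixed point of the map $x\mapsto\beta_i+q_ix$, and from \eqref{easy1} (or summing the geometric series in \eqref{mainfunction}) we get
\begin{equation*}
f\left(\Delta^{Q_s}_{(i)}\right)=\delta_i\sum_{n=0}^{\infty}g_i^n=\frac{\delta_i}{1-g_i},
\end{equation*}
which converges since $|g_i|<1$. Hence $M\geq\max_{i\in A_s}\frac{\delta_i}{1-g_i}$.

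For the upper bound we find the cylinder of rank one on which the maximum is attained. Since $[0,1]=\bigcup_{i}\Delta^{Q_s}_i$, we have $M=\max_{i}\max_{x\in\Delta^{Q_s}_i}f(x)$. By Lemma \ref{lemmaMnot} the maximum is not attained on $\Delta^{Q_s}_k$, so $M=\max_{x\in\Delta^{Q_s}_j}f(x)$ for some $j\in A_s\setminus\{k\}$. Lemma \ref{lemmamaxmin} then gives $M=\delta_j+g_jM$ with $0<g_j<1$, so $M=\frac{\delta_j}{1-g_j}\leq\max_{i\in A_s}\frac{\delta_i}{1-g_i}$. Combining with the lower bound yields the equality.

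The only delicate point is excluding the cylinder $\Delta^{Q_s}_k$, where the self-similarity equation involves $m$ instead of $M$ and the fixed-point argument breaks down. Lemma \ref{lemmaMnot} already handles exactly this case, so no further obstacle remains. One should only note that $M$ is finite, since $f$ is continuous on $[0,1]$, so the linear equation $M=\delta_j+g_jM$ legitimately determines $M$.
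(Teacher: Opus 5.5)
Your proof is correct and follows the paper's argument. Lemmas \ref{lemmamaxmin} and \ref{lemmaMnot} give $M=\frac{\delta_j}{1-g_j}$ for some $j\neq k$, and the values at the points $\Delta^{Q_s}_{(i)}$ supply the matching lower bound. Taking that lower bound over all $i\in A_s$, including $k$, is a small streamlining: it lets you skip the paper's separate check that $\frac{\delta_{k-1}}{1-g_{k-1}}>\frac{\delta_k}{1-g_k}$.
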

\begin{proof}
Using Lemmas \ref{lemmamaxmin} and \ref{lemmaMnot}, we have
\begin{equation*}
M=\max_{i\in A_s\setminus\{k\}}\left\{\max\limits_{x\in\Delta^{Q_s}_{i}}f(x)\right\}=\max_{i\in A_s\setminus\{k\}}\left\{\delta_i+g_iM\right\}.
\end{equation*}
Therefore, there exists $j\in A_s\setminus\{k\}$ such that $M=\delta_j+g_jM$, which gives $M=\frac{\delta_j}{1-g_j}$.

Let $t$ be a number given by
$\frac{\delta_t}{1-g_t}=\max\limits_{i\in A_s\setminus\{k\}}\left\{\frac{\delta_i}{1-g_i}\right\}$.
Then
$M=\frac{\delta_j}{1-g_j}\leq\frac{\delta_t}{1-g_t}.$
Since 
$$f\left(\Delta^{Q_s}_{(t)}\right)=\delta_t+\delta_t g_t+\delta_t g_t^2+\cdots=\frac{\delta_t}{1-g_t},$$
we have $M= \frac{\delta_t}{1-g_t}$. It is easy to check that
$\frac{\delta_{k-1}}{1-g_{k-1}}>\frac{\delta_k}{1-g_k}.$
Therefore,
\begin{equation*}
M=\max\limits_{i\in A_s\setminus\{k\}}\left\{\frac{\delta_i}{1-g_i}\right\}=\max\limits_{i\in A_s}\left\{\frac{\delta_i}{1-g_i}\right\}.\qedhere
\end{equation*}
\end{proof}

\begin{figure}[htbp]
\centering
\begin{subfigure}[t]{0.45\linewidth}
    \centering
    \includegraphics[width=\linewidth]{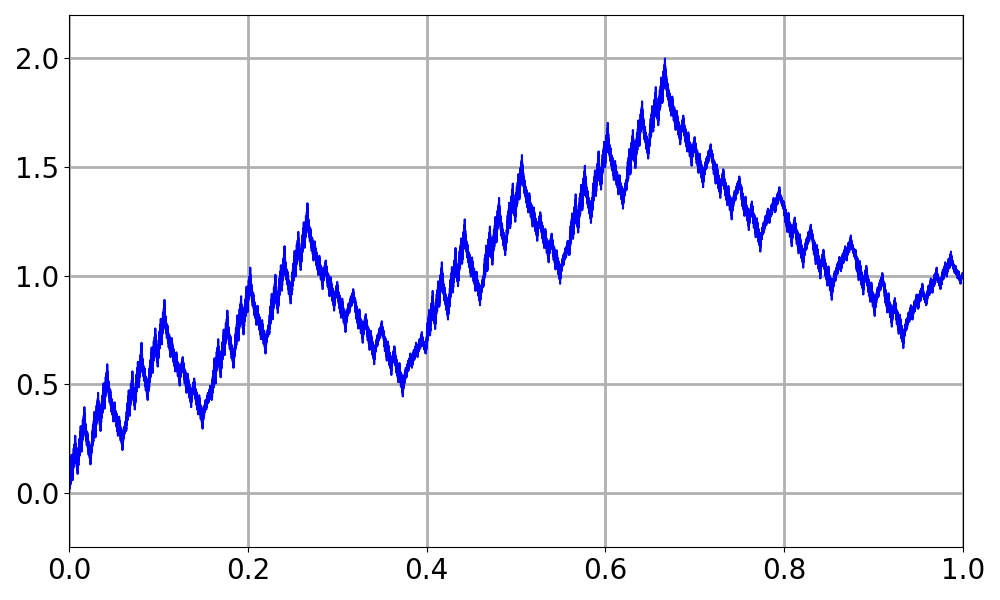}
    \caption{\centering Graph of the nowhere differentiable function $f$ for
    $q_0=q_1=\frac{2}{5}$, $q_2=\frac{1}{5}$ and
    $g_0=g_1=\frac{2}{3}$, $g_2=-\frac{1}{3}$.}
    \label{fig:f2}
\end{subfigure}
\hfill
\begin{subfigure}[t]{0.45\linewidth}
    \centering
    \includegraphics[width=\linewidth]{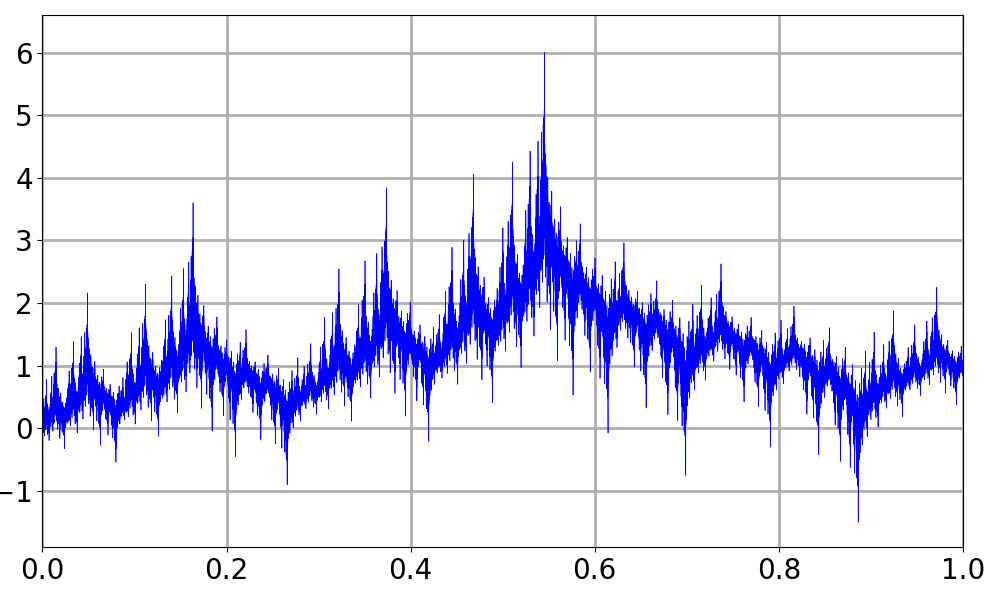}
    \caption{\centering Graph of the nowhere differentiable function $f$ for $q_0=\frac{3}{10}$, $q_1=\frac{9}{20}$, $q_2=\frac{1}{4}$ and $g_0=\frac{3}{5}$, $g_1=\frac{9}{10}$, $g_2=-\frac{1}{2}$ \newline with a negative minimum.}
    \label{fig:f3}
\end{subfigure}
\caption{Examples of graphs of the function $f$.}
\label{fig:two_examples}
\end{figure}

\begin{corollary}\label{cor1}
If $\frac{\delta_i}{1-g_i}<M$, then $\max\limits_{x\in\Delta^{Q_s}_{i}}f(x)<M$.
\end{corollary}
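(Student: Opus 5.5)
This follows almost at once from Lemma \ref{lemmamaxmin} and Lemma \ref{lemmaMnot}; the argument splits according to whether $i=k$.

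\emph{Case $i\neq k$.} By Lemma \ref{lemmamaxmin},
\begin{equation*}
\max_{x\in\Delta^{Q_s}_{i}}f(x)=\delta_i+g_iM .
\end{equation*}
Since $0<g_i<1$, the hypothesis $\frac{\delta_i}{1-g_i}<M$ can be multiplied by the positive number $1-g_i$. This gives $\delta_i<M-g_iM$, that is, $\delta_i+g_iM<M$, which is the claim.

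\emph{Case $i=k$.} Here we use that $\max_{x\in\Delta^{Q_s}_{k}}f(x)\leq M$ trivially, while Lemma \ref{lemmaMnot} states that this maximum is not equal to $M$. Hence it is strictly less than $M$, with no further computation. Note that the hypothesis is automatically true for $i=k$, since the proof of Theorem \ref{theoremmax} showed $\frac{\delta_k}{1-g_k}<\frac{\delta_{k-1}}{1-g_{k-1}}\le M$.

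The only point needing care is the sign of $1-g_i$ when clearing the denominator. Under conditions \eqref{conditions} we have $g_i\in(0,1)$ for $i\neq k$, so $1-g_i>0$. For $i=k$ we have $1-g_k>1$, but that case does not use the inequality at all. Beyond this there is no real obstacle.
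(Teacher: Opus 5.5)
Your proof is correct and follows the paper's argument exactly: for $i\neq k$ you apply Lemma~\ref{lemmamaxmin} and clear the positive denominator $1-g_i$, and for $i=k$ you invoke Lemma~\ref{lemmaMnot}. Your additional remarks are accurate but not needed for the proof: that $1-g_i>0$, and that the hypothesis holds automatically for $i=k$ because $\frac{\delta_k}{1-g_k}<\frac{\delta_{k-1}}{1-g_{k-1}}\le M$.
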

\begin{proof}
From the condition we have $\delta_i<(1-g_i)M$. If $i\not=k$, then
\begin{equation*}
\max\limits_{x\in\Delta^{Q_s}_{i}}f(x)=\delta_i+g_iM<(1-g_i)M+g_iM=M.
\end{equation*}
If $i=k$, then the statement follows directly from Lemma \ref{lemmaMnot}.
\end{proof}

\section{Continuum level sets and fractal structure of the set of maximum points of the function $f$}

\begin{definition}
The level set of value $y$ for a function $F\colon [0,1]\to \mathbb{R}$ is defined as the set
\begin{equation*}
    F^{-1}(y)=\left\{x\in [0,1]\colon F(x)=y\right\}.
\end{equation*}
\end{definition}

\begin{lemma}\label{levellemma}
If the set $V(y)$ given by
\begin{equation*}
  V(y)=\left\{i\in A_s\colon \frac{\delta_i}{1-g_i}=y\right\}  
\end{equation*}
is non-empty for some $y$, then
\begin{equation*}
f^{-1}(y)\supseteq C\left[Q_s,V(y)\right]=\left\{x=\Delta^{Q_s}_{\alpha_1 \alpha_2 \ldots}\colon \alpha_n\in V(y)\text{ for all }n\in\mathbb{N}\right\}.
\end{equation*}
In particular, if $|V(y)|\geq 2$, then the set $f^{-1}(y)$ has the cardinality of the continuum.
\end{lemma}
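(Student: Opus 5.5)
The plan is to compute $f$ directly on the set $C\left[Q_s,V(y)\right]$ from the defining series \eqref{mainfunction}. For every $i\in V(y)$ we have $\delta_i=y(1-g_i)$. Take $x=\Delta^{Q_s}_{\alpha_1\alpha_2\ldots}$ with all $\alpha_n\in V(y)$, and write $P_0=1$, $P_n=\prod_{j=1}^{n}g_{\alpha_j}$. Then
\begin{equation*}
f(x)=\sum_{n=1}^{\infty}\delta_{\alpha_n}P_{n-1}=y\sum_{n=1}^{\infty}(1-g_{\alpha_n})P_{n-1}=y\sum_{n=1}^{\infty}\left(P_{n-1}-P_n\right).
\end{equation*}
This is a telescoping series with partial sums $y(1-P_N)$. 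Since $|g_i|<1$ and $V(y)$ is finite, we have $|P_N|\le \left(\max_{i\in A_s}|g_i|\right)^N\to 0$. Hence $f(x)=y$, which proves $C\left[Q_s,V(y)\right]\subseteq f^{-1}(y)$.

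One point needs checking. If $x$ is $Q_s$-binary, the value of $f$ does not depend on which representation is used (property 1 from \cite{PrK2013}). So it is enough that one representation of $x$ has all digits in $V(y)$, and the computation above applies to that representation.

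For the cardinality statement, assume $|V(y)|\ge 2$ and pick distinct $a,b\in V(y)$. Consider the map $A_s^{\mathbb N}\supseteq\{a,b\}^{\mathbb N}\to[0,1]$ given by $(\alpha_n)\mapsto\Delta^{Q_s}_{\alpha_1\alpha_2\ldots}$. By the Main theorem on $Q_s$-representation, each point has at most two representations. So this map is at most two-to-one, and its image, which is contained in $C\left[Q_s,V(y)\right]$, has the cardinality of the continuum. Alternatively, one can restrict to sequences that are not eventually constant, on which the map is injective.

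The only mild obstacle is the binary-point bookkeeping just described. The telescoping computation itself is routine.
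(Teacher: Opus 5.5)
Your proof is correct and takes essentially the paper's route. The paper combines \eqref{easy1} with $\delta_{\alpha_k}=(1-g_{\alpha_k})y$ to obtain $f(x)-y=\bigl(f(\Delta^{Q_s}_{\alpha_{n+1}\alpha_{n+2}\ldots})-y\bigr)\prod_{i=1}^{n}g_{\alpha_i}$, and this identity is exactly your telescoping partial sum $y(1-P_N)$ plus the tail; your explicit treatment of $Q_s$-binary points and the at-most-two-to-one cardinality argument fill in details the paper leaves implicit.
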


\begin{proof}
Let $x=\Delta^{Q_s}_{\alpha_1 \alpha_2 \ldots}\in C\left[Q_s,V(y)\right]$. Then $\delta_{\alpha_n}=(1-g_{\alpha_n})y$. Using equality \eqref{easy1} and mathematical induction, it is easy to show that for every $n\in\mathbb{N}$ the following equality holds
\begin{equation}\label{1}
f\left(\Delta^{Q_s}_{\alpha_1 \alpha_2 \ldots}\right)-y=\left(f\left(\Delta^{Q_s}_{\alpha_{n+1} \alpha_{n+2} \ldots}\right)-y\right)\cdot\prod_{i=1}^{n}g_{\alpha_i}.
\end{equation}
Since $\prod\limits_{i=1}^{n}g_{\alpha_i}\to0$ as $n\to\infty$ and the expression $f\left(\Delta^{Q_s}_{\alpha_{n+1} \alpha_{n+2} \ldots}\right)-y$ is bounded, the right-hand side of the~equa\-lity~\eqref{1} tends to 0. Therefore, $f(x)=y$ and $C\left[Q_s,V(y)\right]\subseteq f^{-1}(y)$.

If $|V(y)|\geq 2$, then the set $C\left[Q_s,V(y)\right]$ has the cardinality of the continuum. Hence, the set $f^{-1}(y)$ is also a~continuum.
\end{proof}

\begin{example}\label{ex2}
Consider the function $f$ defined for the $Q_5$-representation with $q_0=q_4=\frac{1}{20}$, $q_1=q_3=\frac{7}{20}$, $q_2=\frac{1}{5}$ and $g_0=0.5$, $g_1=0.2$, $g_2=0.1$, $g_3=-0.28$, $g_4=0.48$. In this case
\begin{equation*}
    \frac{\delta_1}{1-g_1}=\frac{\delta_3}{1-g_3}=0.625
\end{equation*}
and $V(0.625)=\{1,3\}$. Therefore, the level set corresponding to $y=0.625$ is continuum and contains the subset $C\left[Q_5,\{1,3\}\right]$. The function $f$ is also singular, since $\sum\limits_{i=0}^{4} q_i\ln|g_i|\approx -1.54$ and $\sum\limits_{i=0}^{4} q_i\ln q_i \approx -1.36$.
\end{example}

\begin{figure}[h]
\centering
\includegraphics[width=0.65\textwidth]{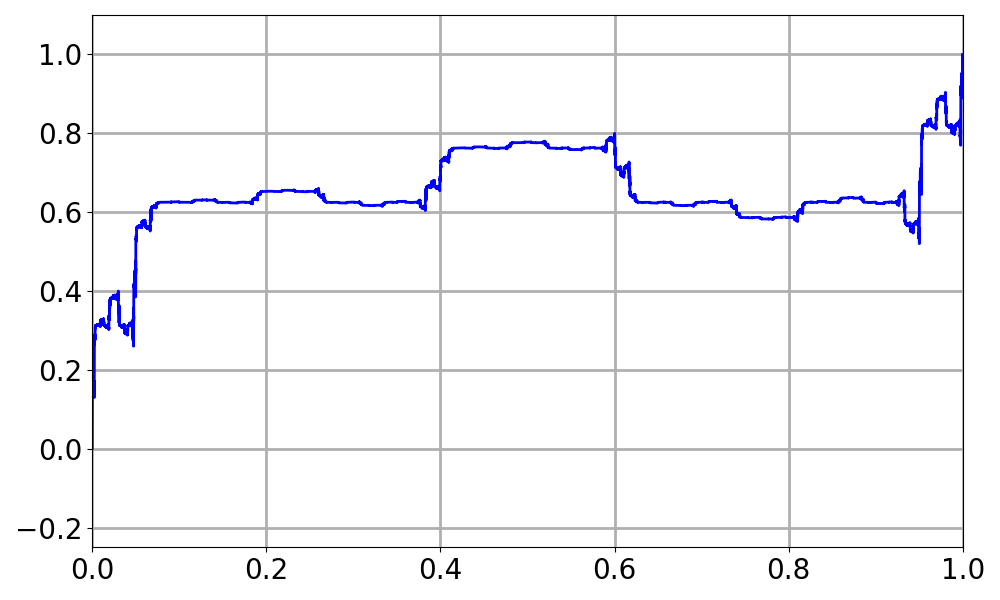}
\caption{\centering Graph of the nowhere monotonic singular function $f$ with continuum level sets from the~Example \ref{ex2}.}
\label{graphs3}
\end{figure}

\begin{corollary}
If $|V(y)|\geq 2$ for some $y$, then the function $f$ has at least countably many continuum level sets.
\end{corollary}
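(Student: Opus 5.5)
The plan is to use the self-affinity relation \eqref{easy1} to carry a known continuum level set into countably many disjoint cylinders, where it becomes a level set of new, distinct values. Suppose $|V(y)|\geq 2$. By Lemma \ref{levellemma}, $f^{-1}(y)\supseteq C[Q_s,V(y)]$, and this set has the cardinality of the continuum. For each finite word $c_1\ldots c_n$, equality \eqref{easy1} gives, for every $x'=\Delta^{Q_s}_{\alpha_1\alpha_2\ldots}\in C[Q_s,V(y)]$,
\begin{equation*}
f\left(\Delta^{Q_s}_{c_1\ldots c_n\alpha_1\alpha_2\ldots}\right)=\delta_{c_1}+\sum_{j=2}^{n}\delta_{c_j}\prod_{i=1}^{j-1}g_{c_i}+y\prod_{i=1}^{n}g_{c_i}=:y_{c_1\ldots c_n}.
\end{equation*}
The map $x'\mapsto\Delta^{Q_s}_{c_1\ldots c_n\alpha_1\ldots}$ is injective (it is affine with slope $\prod q_{c_i}>0$). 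Hence $f^{-1}(y_{c_1\ldots c_n})$ contains a continuum subset of the cylinder $\Delta^{Q_s}_{c_1\ldots c_n}$. Every value $y_{c_1\ldots c_n}$ therefore has a continuum level set, and it remains to show that infinitely many of these values are distinct.

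To get distinctness, I would use words $0^n=0\ldots0$, since $\delta_0=0$. This gives $y_{0^n}=g_0^n y$. If $y\neq0$, then because $0<|g_0|<1$ the numbers $g_0^n y$, $n\geq0$, are pairwise distinct. That yields countably many distinct values, each with a continuum level set, and these level sets are disjoint because the values differ.

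The main obstacle is the degenerate case $y=0$. In that case $\delta_i=0$ for all $i\in V(y)$, so $V(y)\subseteq\{0\}$ because $\delta_i>0$ for $i\neq0$ under \eqref{conditions2}? That positivity is not assumed in general: we only know $\delta_0=0$ and $\delta_1=g_0\neq0$, so some $\delta_i$ with $i\geq2$ could vanish. I would therefore switch to words $(s-1)^n$ or another fixed digit $c$ with $y_c\neq0$. Choose any digit $c$ with $\delta_c\neq0$; for instance $c=1$ works, since $\delta_1=g_0\neq0$. Then $y_c=\delta_c+g_c\cdot0=\delta_c\neq0$. Next apply the argument of the previous paragraph to the continuum level set of value $y_c$, which is the image of $C[Q_s,V(0)]$ in $\Delta^{Q_s}_c$. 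The values $y_{0^n c}=g_0^n\delta_c$, $n\geq0$, are pairwise distinct and nonzero. Together with $y=0$ itself, this gives countably many distinct values with continuum level sets, which proves the corollary.
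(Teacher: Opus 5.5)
Your proof is correct and follows the paper's route: prefixing $n$ zeros to the points of $C[Q_s,V(y)]$ gives continuum subsets of $f^{-1}(g_0^n y)$. Your separate treatment of $y=0$, via the values $g_0^n\delta_1$ for $n\ge 0$, also fixes a gap in the paper. There the one-line proof yields only the single value $0$. Yet $|V(0)|\geq 2$ can occur under \eqref{conditions2} alone: for example, $s=4$ with $(g_0,g_1,g_2,g_3)=(\tfrac12,-\tfrac12,\tfrac12,\tfrac12)$ gives $V(0)=\{0,2\}$.
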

\begin{proof}
Let $y_n = g_0^n y$. Then $f(x)=y_n$ for all $x=\Delta^{Q_s}_{\underbrace{0\ldots0}_n\alpha_{n+1}\alpha_{n+2}\ldots}$, where $\alpha_k\in V(y)$ for all $k\geq n+1$. Clearly, the set of such numbers $x$ has the cardinality of the continuum.
\end{proof}

Now consider in more detail the case where the function $f$ satisfies conditions \eqref{conditions}.

\begin{theorem}
Let $V(M)=\left\{i\in A_s\colon \frac{\delta_i}{1-g_i}=M \right\}$, where $M=\max\limits_{x\in [0,1]}f(x)$. Then the~set of maximum points of the function $f$ is
    \begin{equation*}
        C\left[Q_s,V(M)\right]=\left\{x=\Delta^{Q_s}_{\alpha_1 \alpha_2\ldots}\colon \alpha_n\in V(M) \text{ for all } n\in\mathbb{N}\right\}.
    \end{equation*}
In particular,
\begin{enumerate}
    \item[1.] if $\left|V(M)\right|\geq 2$, then $C\left[Q_s,V(M)\right]$ is a continuum Cantor-type set with zero Lebesgue measure;
    \item[2.] if $V(M)=\{i\}$, then the function $f$ has a unique maximum point $x_{max}=\Delta^{Q_s}_{(i)}$.
\end{enumerate}
\end{theorem}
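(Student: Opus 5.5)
Under conditions \eqref{conditions}, Theorem \ref{theoremmax} gives $V(M)\neq\emptyset$, and the proof of that theorem shows $\frac{\delta_{k-1}}{1-g_{k-1}}>\frac{\delta_k}{1-g_k}$, so $k\notin V(M)$. The inclusion $C[Q_s,V(M)]\subseteq f^{-1}(M)$ is immediate from Lemma \ref{levellemma} with $y=M$. The real content is the reverse inclusion: every maximum point has all its $Q_s$-digits in $V(M)$.

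\textbf{Reverse inclusion.} Let $f(x)=M$ with $x=\Delta^{Q_s}_{\alpha_1\alpha_2\ldots}$. If $x$ is $Q_s$-binary, either representation may be used; the argument below shows that both must have all digits in $V(M)$. I claim $\alpha_1\in V(M)$. If $\alpha_1\notin V(M)$, then $\frac{\delta_{\alpha_1}}{1-g_{\alpha_1}}<M$, so Corollary \ref{cor1} gives $\max_{\Delta^{Q_s}_{\alpha_1}}f<M$, while $x\in\Delta^{Q_s}_{\alpha_1}$ and $f(x)=M$, a contradiction.

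Next, since $\alpha_1\in V(M)$, we have $\alpha_1\neq k$, so $g_{\alpha_1}>0$ and $\delta_{\alpha_1}=(1-g_{\alpha_1})M$. Equality \eqref{easy1} with $n=1$ gives
\[
M=f(x)=\delta_{\alpha_1}+g_{\alpha_1}f\bigl(\Delta^{Q_s}_{\alpha_2\alpha_3\ldots}\bigr),
\]
hence $f(\Delta^{Q_s}_{\alpha_2\alpha_3\ldots})=M$. The shifted point is again a maximum point, and induction gives $\alpha_n\in V(M)$ for all $n$. This is essentially the identity \eqref{1} run backwards. For a binary point, the argument applies to each representation separately, so both representations lie in $V(M)^{\mathbb N}$, which is consistent with the set description.

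\textbf{Items 1 and 2.} For item 2, $V(M)=\{i\}$ means the only admissible digit sequence is $(i)$, so the maximum point is $\Delta^{Q_s}_{(i)}$. For item 1, $|V(M)|\ge2$ gives continuum cardinality exactly as in Lemma \ref{levellemma}.

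The set is closed: it is $\bigcap_n\bigcup_{c_1,\dots,c_n\in V(M)}\Delta^{Q_s}_{c_1\ldots c_n}$, a finite union of closed cylinders at each level. Since $V(M)\ne A_s$ (at least $k\notin V(M)$), its Lebesgue measure is at most $\sum_{c\in V(M)^n}\prod_{j=1}^n q_{c_j}=\bigl(\sum_{i\in V(M)}q_i\bigr)^n\to0$, using property 4 of cylinders.

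It is nowhere dense and perfect, i.e.\ of Cantor type. Nowhere density follows from measure zero together with closedness. Perfectness holds because every point is the limit of points that differ from it only in a late digit chosen from $V(M)$, with at least two choices available.

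\textbf{Main obstacle.} The delicate step is the first one: ruling out maximum points whose first digit lies outside $V(M)$, in particular $\alpha_1=k$. This relies on Corollary \ref{cor1}, and through it on Lemma \ref{lemmaMnot}. I would double-check that the strict inequality in Corollary \ref{cor1} covers every $i\notin V(M)$, including $i=k$, because the whole descent argument depends on it.
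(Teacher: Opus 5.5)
Your proposal is correct and follows essentially the paper's route. Both arguments rest on Corollary \ref{cor1} (which does cover $i=k$, via Lemma \ref{lemmaMnot}), the shift identity \eqref{1}, and $k\notin V(M)$. The paper argues contrapositively (at the first digit outside $V(M)$ the value drops strictly below $M$, since the prefix product is positive), while you run a descent showing every shift of a maximum point is again a maximum point; for item 1 you prove measure zero and the Cantor-type structure directly from $\sum_{i\in V(M)}q_i<1$ instead of citing \cite{AKPrT2011}, which is fine.
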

\begin{proof}
If $x\in C\left[Q_s,V(M)\right]$, then $C\left[Q_s,V(M)\right]\subseteq f^{-1}(M)$, which follows from Lemma \ref{levellemma}. We show that if $x\notin C\left[Q_s,V(M)\right]$, then $f(x)<M$.

Indeed, if $x=\Delta^{Q_s}_{\alpha_1 \alpha_2 \ldots}\notin C\left[Q_s,V(M)\right]$, then there exists an index $n$ such that $\alpha_{i}\in V(M)$ for all $i<n$ and $\alpha_n\notin V(M)$. Moreover, $\delta_{\alpha_i}=(1-g_{\alpha_i})M$ for all $i<n$ and $\delta_{\alpha_n}<(1-g_{\alpha_n})M$. Since
\begin{equation*}
  \frac{\delta_0}{1-g_0}=0<M,\qquad \frac{\delta_{k}}{1-g_{k}}<\delta_k<M,
\end{equation*}
it follows that $0\notin V(M)$ and $k\notin V(M)$. Therefore, $g_{\alpha_i}>0$ for $i<n$ and $\max\limits_{x\in\Delta^{Q_s}_{\alpha_n}}f(x)<M$ (by Corollary \ref{cor1}). Using the equality \eqref{1}, we obtain
\begin{align*}
f(x)-M=\left(f\left(\Delta^{Q_s}_{\alpha_{n} \alpha_{n+1} \ldots}\right)-M\right)\cdot\prod_{i=1}^{n-1}g_{\alpha_i}\leq\left(\max_{x\in\Delta^{Q_s}_{\alpha_n}}f(x)-M\right)\cdot\prod_{i=1}^{n-1}g_{\alpha_i}<0.
\end{align*}
Hence $f(x)<M$ and $f^{-1}(M)=C\left[Q_s,V(M)\right]$.

As shown in \cite{AKPrT2011}, if $2\leq |V(M)|\leq s-1$, then $C\left[Q_s,V(M)\right]$ is a continuum Cantor-type set with zero Lebesgue measure. If $V(M)=\{i\}$, then $C\left[Q_s,V(M)\right]=\left\{ \Delta^{Q_s}_{(i)}\right\}$.
\end{proof}

\begin{corollary}
If $s=3$, then $f$ has a unique maximum point $x_\text{max}=\Delta^{Q_3}_{(1)}$ and
\begin{equation*}
M=\max\limits_{x\in[0,1]}f(x)=\frac{\delta_1}{1-g_1}.
\end{equation*}
\end{corollary}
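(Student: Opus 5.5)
For $s=3$, conditions \eqref{conditions} pin down the configuration. Since they force $k\geq 2$, we must have $k=2$, so $g_2<0$, $g_0,g_1>0$ and $\delta_2=g_0+g_1>1$. I will combine Theorem \ref{theoremmax} with the preceding theorem on the set of maximum points. The goal is to show that $V(M)=\{1\}$; item 2 of that theorem then gives the unique maximum point $\Delta^{Q_3}_{(1)}$ and $M=\frac{\delta_1}{1-g_1}$.

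First, by Theorem \ref{theoremmax}, $M=\max\{\frac{\delta_0}{1-g_0},\frac{\delta_1}{1-g_1},\frac{\delta_2}{1-g_2}\}$. Here $\frac{\delta_0}{1-g_0}=0$. For the third term, $\frac{\delta_2}{1-g_2}=\frac{\delta_2}{1+|g_2|}<\delta_2$. For the second, $\delta_1=g_0$, so $\frac{\delta_1}{1-g_1}=\frac{g_0}{1-g_1}$. From $g_0+g_1>1$ we get $g_0>1-g_1>0$, hence $\frac{g_0}{1-g_1}>1>0$.

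The step needing care is the strict inequality $\frac{\delta_1}{1-g_1}>\frac{\delta_2}{1-g_2}$. The proof of Theorem \ref{theoremmax} already asserts $\frac{\delta_{k-1}}{1-g_{k-1}}>\frac{\delta_k}{1-g_k}$, but I will verify it directly rather than rely on "easy to check". Write $a=1-g_1\in(0,1)$. Then $g_0>a$, so
\[
\frac{\delta_1}{1-g_1}=\frac{g_0}{a}>\frac{g_0}{a}\cdot a+\frac{g_0}{a}(1-a)\cdot 0 .
\]
A cleaner route is to compare with $\delta_2$. Since $\frac{g_0}{a}>1$ and $0<a<1$,
\[
\frac{g_0}{a}-(g_0+g_1)=\frac{g_0}{a}-g_0-1+a=\frac{(g_0-a)(1-a)}{a}>0 .
\]
Therefore $\frac{\delta_1}{1-g_1}>\delta_2>\frac{\delta_2}{1+|g_2|}=\frac{\delta_2}{1-g_2}$.

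Consequently the maximum among the three values is attained only at $i=1$, so $V(M)=\{1\}$ and $M=\frac{\delta_1}{1-g_1}$. Item 2 of the preceding theorem then yields the unique maximum point $x_{\max}=\Delta^{Q_3}_{(1)}$.
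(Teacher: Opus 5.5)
Your proof is correct and follows essentially the paper's route: with $k=2$ forced, you show $V(M)=\{1\}$ and invoke item 2 of the preceding theorem. The paper gets $V(M)=\{1\}$ from $0,k\notin V(M)$ in that theorem's proof, whereas your identity $\frac{\delta_1}{1-g_1}-\delta_2=\frac{(g_0-a)(1-a)}{a}>0$ verifies the ``easy to check'' inequality directly; the displayed line ending in ``$\cdot 0$'' is a true but pointless leftover and can be deleted.
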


\begin{example}\label{ex1}
Let $f$ be a self-affine nowhere differentiable function defined by the parameters $q_0=q_2=q_3=\frac{1}{5}$, $q_1=\frac{2}{5}$ and $g_0 = g_2 = \frac{2}{5}$, $g_1= \frac{4}{5}$, $g_3=-\frac{3}{5}$. For this function
\begin{equation*}
  \max_{0\leq i\leq 3}\left\{\frac{\delta_i}{1-g_i}\right\}= \frac{\delta_1}{1-g_1} = \frac{\delta_2}{1-g_2}=2,
\end{equation*}
$M=2$ and $V(2)=\{1,2\}$. Then the set of maximum points of the function $f$ is the continuum nowhere dense set $C\left[Q_4,\{1,2\}\right]$ with zero Lebesgue measure. Moreover, its Hausdorff dimension $\dim_H C\left[Q_4,\{1,2\}\right]$ is the solution to the equation
\begin{equation*}
    \left(\frac{2}{5}\right)^x+\left(\frac{1}{5}\right)^x=1
\end{equation*}
and is approximately equal to $0.564$.

\begin{figure}[h]
\centering
\includegraphics[width=0.6\textwidth]{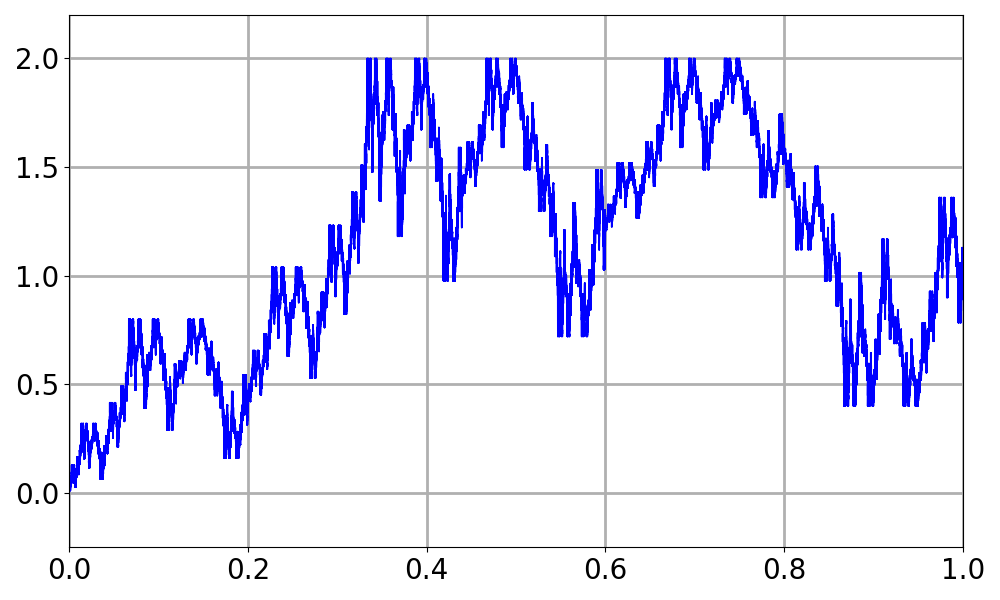}
\caption{\centering Graph of the nowhere differentiable function $f$ with a continuum set of maximum points from the Example \ref{ex1}.}
\label{graphs4}
\end{figure}

The set $C\left[Q_4,\{1,2\}\right]$ can be constructed geometrically (similar to the classical Cantor set). At the first step, the~interval $[0,1]$ is divided into four subintervals in the ratio $1:2:1:1$. The two central subintervals are kept, and the remaining points are deleted. At the second step, the same procedure is applied to the remaining two intervals. This process is continued inductively. The first 5 steps are illustrated in the Figure \ref{CantorTypeSet}. The set $C\left[Q_4,\{1,2\}\right]$ consists of those points that are never removed at any step.
\end{example}

\begin{figure}[h]
\centering
\includegraphics[width=0.7\textwidth]{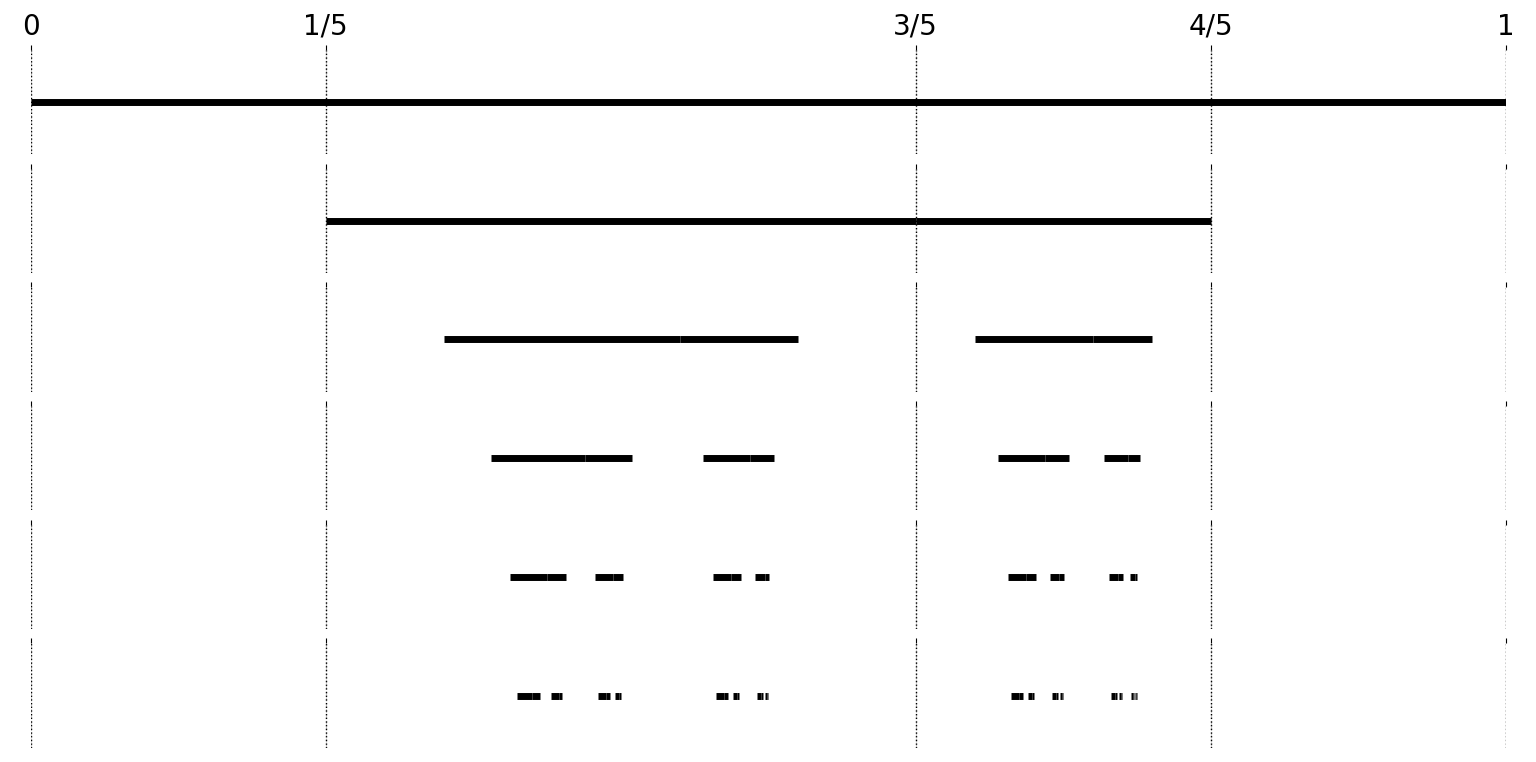}
\caption{Geometric construction of the set $C\left[Q_s,V\right]$ from the Example \ref{ex1}.}
\label{CantorTypeSet}
\end{figure}

\section{Non-invariance of metric, fractal, and topological characteristics of sets under the transformation $f$.}

Consider the case where the function $f$ satisfies conditions \eqref{conditions}.

Let $V^*=\left\{0,1,\ldots,k-1\right\}$. Consider the set
\begin{equation*}
C\left[Q_s,V^*\right]=\left\{x=\Delta^{Q_s}_{\alpha_1 \alpha_2\ldots}\colon \alpha_n\in V^* \text{ for all } n\in\mathbb{N}\right\}.
\end{equation*}

\begin{lemma}\label{lemmaint}
$[0,1]\subseteq f\left(C\left[Q_s,V^*\right]\right)$.
\end{lemma}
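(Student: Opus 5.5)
The plan is to regard $f$ restricted to $C\left[Q_s,V^*\right]$ as the coding map of an iterated function system of increasing affine contractions, and to show that this system maps a suitable interval $I\supseteq[0,1]$ onto a cover of itself. Under conditions \eqref{conditions} we have $k\geq 2$ and $0<g_i<1$ for all $i\in V^*$. For $i\in V^*$ set $\varphi_i(y)=\delta_i+g_iy$; these are increasing contractions. By \eqref{easy1}, for $x=\Delta^{Q_s}_{\alpha_1\alpha_2\ldots}$ with all $\alpha_n\in V^*$ we have
$$f(x)=\varphi_{\alpha_1}\circ\cdots\circ\varphi_{\alpha_n}\bigl(f(\Delta^{Q_s}_{\alpha_{n+1}\alpha_{n+2}\ldots})\bigr).$$

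Take $I=[0,b]$, where $b=\frac{\delta_{k-1}}{1-g_{k-1}}$ is the fixed point of $\varphi_{k-1}$. The first step is to check that $b\geq 1$. This inequality is equivalent to $\delta_{k-1}+g_{k-1}\geq 1$, that is, to $\delta_k\geq1$, which holds because $\delta_k>1$. The second step is to verify the covering property
$$I=\bigcup_{i\in V^*}\varphi_i(I).$$
Here $\varphi_i(I)=[\delta_i,\delta_i+g_ib]$, the leftmost interval starts at $\varphi_0(0)=0$, and the rightmost ends at $\varphi_{k-1}(b)=b$. Consecutive images overlap or abut because $\delta_{i+1}=\delta_i+g_i\leq\delta_i+g_ib$, which follows from $b\geq1$ and $g_i>0$. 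Hence the union is exactly $[0,b]$.

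The third step is, for a given $y\in[0,1]\subseteq I$, to choose digits $\alpha_1,\alpha_2,\ldots\in V^*$ inductively so that
$$y\in J_n:=\varphi_{\alpha_1}\circ\cdots\circ\varphi_{\alpha_n}(I)$$
for every $n$. This is possible by the covering property applied at each stage: $J_n=\varphi_{\alpha_1}\circ\cdots\circ\varphi_{\alpha_n}\bigl(\bigcup_i\varphi_i(I)\bigr)$. The intervals $J_n$ are nested, and $|J_n|=b\prod_{j=1}^n g_{\alpha_j}\to0$. Now put $x=\Delta^{Q_s}_{\alpha_1\alpha_2\ldots}\in C\left[Q_s,V^*\right]$. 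The displayed formula gives $f(x)=\varphi_{\alpha_1}\circ\cdots\circ\varphi_{\alpha_n}(z_n)$ with $z_n=f(\Delta^{Q_s}_{\alpha_{n+1}\ldots})$. These $z_n$ lie in $[m,M]$, not necessarily in $I$, so $f(x)$ itself need not lie in $J_n$. Instead, $f(x)$ lies within distance $(M-m)\prod_{j=1}^n g_{\alpha_j}$ of the image of $I$ under the same composition, because that composition is affine with slope $\prod_{j=1}^n g_{\alpha_j}$. Letting $n\to\infty$ gives $f(x)=y$. (Alternatively, $f(C\left[Q_s,V^*\right])$ is the attractor of $\{\varphi_i\}_{i\in V^*}$, and any compact $I$ with $I\subseteq\bigcup\varphi_i(I)$ lies in the attractor.)

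The only delicate point is the covering inequality $b\geq1$, which is exactly where the hypothesis $\delta_k>1$ enters; the remaining steps are the routine nested-interval argument.
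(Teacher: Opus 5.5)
Your proof is correct and is essentially the paper's argument. The paper makes the same digit-by-digit choice and uses the same $(M-m)g_*^n$ bound, but with $I=[0,1]$: there $\varphi_i([0,1])=[\delta_i,\delta_{i+1}]$ abut exactly and cover $[0,\delta_k]\supseteq[0,1]$, so no fixed point $b$ or overlap check is needed.

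One slip: the equality $I=\bigcup_{i\in V^*}\varphi_i(I)$ and the nestedness of the $J_n$ can fail. For example, $g_0=0.2$, $g_1=0.9$, $g_2=0.1$, $g_3=-0.2$ give $b=11/9$ but $\varphi_1(I)=[0.2,1.3]$. This is harmless, because only the inclusion $I\subseteq\bigcup_{i\in V^*}\varphi_i(I)$ is used, and your overlap argument does establish it.
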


\begin{proof}
Consider an arbitrary number $y\in[0,1]$. Since $\delta_0=0$ and $\delta_{k}>1$, there exists a number $\alpha_1\in V^*$ such that $y\in[\delta_{\alpha_1},\delta_{\alpha_1+1}]$. Denote $y_1=\frac{y-\delta_{\alpha_1}}{g_{\alpha_1}}$. Then
\begin{gather*}
y=\delta_{\alpha_1}+g_{\alpha_1}y_1\qquad\text{and}\qquad 0\leq y_1\leq \frac{\delta_{\alpha_1+1}-\delta_{\alpha_1}}{g_{\alpha_1}}=1.
\end{gather*}
Similarly, there exists a number $\alpha_2\in V^*$ such that $y_1\in[\delta_{\alpha_2},\delta_{\alpha_2+1}]$. Denote $y_2=\frac{y_1-\delta_{\alpha_2}}{g_{\alpha_2}}$. Then
\begin{gather*}
    y_1=\delta_{\alpha_2}+g_{\alpha_2}y_2,\qquad  0\leq y_2\leq \frac{\delta_{\alpha_2+1}-\delta_{\alpha_2}}{g_{\alpha_2}}=1,\\
y=\delta_{\alpha_1}+\delta_{\alpha_2}g_{\alpha_1}+g_{\alpha_1}g_{\alpha_2}y_2.
\end{gather*}
Continuing these steps, we obtain infinite sequences $(y_n)_{n=1}^\infty$ and $(\alpha_n)_{n=1}^\infty$ such that 
\begin{equation*}
y=\delta_{\alpha_1}+\sum_{k=2}^n\left(\delta_{\alpha_k}\prod_{j=1}^{k-1}g_{\alpha_j}\right)+y_n\prod_{j=1}^{n}g_{\alpha_j}
\end{equation*}
for all $n\in\mathbb{N}$, where $y_i\in[0,1]$ and $\alpha_{i}\in V^*$ for all $i\in\mathbb{N}$. Then 
\begin{equation*}
    \left|y-f\left(\Delta^{Q_s}_{\alpha_1\alpha_2\ldots}\right)\right|=\prod_{j=1}^{n}g_{\alpha_j}\cdot \left|y_n-f\left(\Delta^{Q_s}_{\alpha_{n+1}\alpha_{n+2}\ldots}\right)\right|\leq (M-m)\cdot g_*^n
\end{equation*}
for each $n$, where $0<g_*=\max\limits_{i\in V^*}g_i<1$. Under these conditions, $g_*^n\to 0$ as $n\to\infty$. Therefore,
\begin{equation*}
    y=f\left(\Delta^{Q_s}_{\alpha_1\alpha_2\ldots}\right),
\end{equation*}
where $\Delta^{Q_s}_{\alpha_1\alpha_2\ldots}\in C[Q_s,V^*]$, whence $y\in f\left(C\left[Q_s,V^*\right]\right)$. Thus, $[0,1]\subseteq f\left(C\left[Q_s,V^*\right]\right)$.
\end{proof}

\begin{theorem}
The function $f$ does not preserve the Hausdorff dimension, Lebesgue null-measure, and the Baire topological category.
\end{theorem}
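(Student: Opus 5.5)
The plan is to use the single set $C\left[Q_s,V^*\right]$ with $V^*=\{0,1,\ldots,k-1\}$ as a common counterexample for all three properties, under conditions \eqref{conditions}. By Lemma \ref{lemmaint}, $f\left(C\left[Q_s,V^*\right]\right)\supseteq[0,1]$. The image therefore has Hausdorff dimension $1$, positive Lebesgue measure, and is of the second Baire category, since it contains an interval. It remains to show that $C\left[Q_s,V^*\right]$ itself has Hausdorff dimension strictly less than $1$, Lebesgue measure zero, and is nowhere dense, hence of the first category.

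\textbf{Step 1: $C\left[Q_s,V^*\right]$ is closed and nowhere dense.} Since $k\in A_s\setminus V^*$, the set $V^*$ is a proper nonempty subset of $A_s$. Closedness follows from $C\left[Q_s,V^*\right]=\bigcap_n\bigcup_{c_1,\ldots,c_n\in V^*}\Delta^{Q_s}_{c_1\ldots c_n}$, a decreasing intersection of finite unions of closed intervals. This equality needs care only at $Q_s$-binary endpoints; those points lie in the set under some representation, which is harmless. For nowhere density, take any interval $I$. It contains some cylinder $\Delta^{Q_s}_{c_1\ldots c_n}$, and that cylinder contains the subcylinder $\Delta^{Q_s}_{c_1\ldots c_n k}$. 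The interior of the subcylinder misses $C\left[Q_s,V^*\right]$, because the points there have a unique representation, or both of their representations carry the digit $k$ at position $n+1$. Hence the set is nowhere dense, and in particular of the first category.

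\textbf{Step 2: measure zero and dimension $<1$.} At rank $n$ the set is covered by the cylinders with digits in $V^*$, whose total length is $\left(\sum_{i\in V^*}q_i\right)^n=(1-q_k-\cdots-q_{s-1})^n\to0$. So $\lambda\left(C\left[Q_s,V^*\right]\right)=0$. For the dimension, the set is the self-similar attractor of the similarities $x\mapsto\beta_i+q_ix$, $i\in V^*$, which satisfy the open set condition on $(0,1)$. Its Hausdorff dimension is therefore the root $d$ of $\sum_{i\in V^*}q_i^d=1$, as in \cite{AKPrT2011} and Example \ref{ex1}. Since $\sum_{i\in V^*}q_i<1$, we get $d<1$.

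If one prefers to avoid citing the open set condition, a direct upper bound suffices. Cover by rank-$n$ cylinders and note that $\sum_{c\in (V^*)^n}\prod_j q_{c_j}^{d'}=\left(\sum_{i\in V^*}q_i^{d'}\right)^n\to0$ for every $d'$ with $\sum_{i\in V^*}q_i^{d'}<1$. Such $d'<1$ exist by continuity at $d'=1$, so $\dim_H\le d'<1$, which is all that is needed.

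\textbf{Conclusion.} A set of dimension $<1$, measure zero, and first category is mapped by $f$ onto a set containing $[0,1]$. The latter has dimension $1$, positive measure, and is of the second category. So none of the three properties is preserved. I expect the main obstacle to be only the bookkeeping with $Q_s$-binary points in Step 1. The substantive work has already been done in Lemma \ref{lemmaint}.
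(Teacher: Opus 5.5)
Your proposal is correct and takes the paper's route: the paper also uses $C\left[Q_s,V^*\right]$ together with Lemma \ref{lemmaint}, and it simply asserts that this set is nowhere dense, Lebesgue-null and of Hausdorff dimension less than $1$, while its image contains $[0,1]$. Your Steps 1 and 2 supply the routine verification of these three properties (including the handling of $Q_s$-binary points), which the paper leaves implicit.
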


The proof of the theorem follows directly from the Lemma \ref{lemmaint}. Indeed, the set $C\left[Q_s,V^*\right]$ is of the first Baire category (since it is nowhere dense), has Lebesgue measure zero, and has Hausdorff dimension less than 1. At the same time, $f\left(C\left[Q_s,V^*\right]\right)$ is of the second Baire category (since it contains an interval), has positive Lebesgue measure (at least 1), and has Hausdorff dimension equal to 1.

Since the function $f$ satisfies the H\"older condition with a positive exponent $\alpha=\max\limits_{i\in A_s}\frac{\ln |g_i|}{\ln q_i}$, the inequality $\dim_H f(E)\leq \frac{1}{\alpha} \dim_H E$ holds for an arbitrary set $E\subset[0,1]$. Therefore, the function $f$ preserves Hausdorff dimension zero.

\section*{Author contributions}
The authors’ contributions are as follows: Mykola Moroz proposed the problem formulation and the research methodology and verified the results; Volodymyr Yelahin formulated and proved the main statements; both authors contributed equally to the writing and visualization. 

\section*{Funding}
This work was supported by a grant from the Simons Foundation (SFI-PD-Ukraine-00014586, V.O.Ye, M.P.M.)

\end{document}